\begin{document}

\title*{Proportional Consistency of Apportionment Methods}
\author{Michael A. Jones, David McCune, and Jennifer M. Wilson}
\institute{Michael A. Jones \at Mathematical Reviews, Ann Arbor, MI 48103 USA \email{maj@ams.org}
\and David McCune \at William Jewell College, Liberty, MO 64068 USA \email{mccuned@william.jewell.edu}
\and Jennifer M. Wilson \at The New School, New York City, NY 10011 USA \email{wilsonj@newschool.edu}}
%
%
\maketitle


\abstract{We analyze a little-known property of apportionment methods  that
captures how allocations scale with the size of the house: specifically, if, for a fixed
population distribution, the house size and allocation can be scaled down within the
set of integers, then the apportionment should be correspondingly scaled down. Balinski and Young \cite{BY} include this property  among the minimal requirements for a  “reasonable” apportionment method.
We argue that this property is better understood as a consistency requirement
since quota-based apportionments that are “less proportional” meet this requirement
while others that are “more proportional” do not. We also show that the family of
quotatone methods based on stationary divisors (including the quota method) do not satisfy this property.}

\section{Introduction}\label{intro}

Apportionment methods are typically used to  determine the number of representatives allocated to each state or district in a national parliament in proportion to their population. They are also used to determine both the number of seats awarded to political parties based on their share of the popular vote and the number of delegates awarded to presidential candidates in the US presidential primaries based on their performance in the state primaries.  The difficulty is that the apportioned values must be nonnegative integers. A thorough introduction into the methods and their properties in these contexts can be found in \cite{BY}, \cite{P}, and \cite{JMW}, respectively.

Proportional consistency addresses in an indirect way that as the total number of seats increases, the apportionment should get closer to proportionality.   This mirrors our intuitive sense that if there are more seats available then it should be easier to allocate the seats to states in a way that is closer to their fair share.  Although Balinski and Young \cite{BY} include proportional consistency as one of three properties required for an apportionment method to be proportional, the property has not been systematically studied. They state, without proof, that all reasonable methods satisfy this requirement.  Although true for all standard apportionment methods, it is not true of all apportionment methods currently in use or well-known in the literature. In particular, proportional consistency is not satisfied by  several methods used in the US presidential primaries \cite{JMW}, nor does it apply to the quotatone methods that were introduced by Balinski and Young \cite{BYQ} to satisfy quota and monotonicity conditions.

In Section \ref{prelims}, we recall several well-known apportionment methods and give a formal definition of proportional consistency and proportionality.  In Section \ref{quota}, we prove that stationary and other well-known divisor methods satisfy proportional consistency, but that not all divisor methods do. We do the same for quota-based methods.  In Section \ref{quotatone} we show that the primary family of quotatone methods do not satisfy proportional consistency.  We offer some final thoughts as a conclusion.

\section{Preliminaries}\label{prelims}

Let $H > 0$ denote the total number of representatives (i.e., the house size).  Assume state $S_i$ has population  $v_i$, $i=1, \ldots, n$, and  that  $v_1 \ge v_2 \ge \cdots \ge v_n > 0$. Let $V = \sum_i v_i$ be the total population, and let $\mathbf v = (v_1, \ldots, v_n)$. 
An apportionment method $F$ is a mapping that, for each  $(\mathbf v, H)$, assigns one or more $n$-tuples of nonnegative integers $\mathbf h= (h_1, \ldots, h_n)$ satisfying $\sum_{i \le n} h_i = H$ where $h_i$ is the number of seats allocated to state $S_i$. The range of $F$  is set-valued because of the possibility of ties at some point in the apportionment process.  Because ties occur rarely, we will write $F(\mathbf v, H) = \mathbf h$.  

Historically, the two-most important  classes of apportionment methods are  divisor and quota-based methods. A divisor method corresponds to a rounding rule $f$ which assigns to each nonnegative integer $k$, a value $f(k) \in [k,k+1]$ such that there do not exist positive integers $a$ and $b$ for which $f(a)=a-1$ and $f(b)=b$.   The value $f(k)$ acts as a bifurcation point in which positive numbers between $k$ and $f(k)$ are rounded down to $k$ and numbers between $f(k)$ and $k+1$ are rounded up to $k+1$.      

\begin{definition} The divisor method $F$ with rounding rule $f$ satisfies  $F(\mathbf v, H)=\mathbf h$ if  there exists a divisor $x > 0$ such that  $\min_{h_i>0} \frac{v_i}{f(h_i-1)} \ge x \ge \max_{h_j \ge 0}  \frac{v_j}{f(h_j)},$ where the $h_i$ are nonnegative integers that satisfy $\sum_{i \le n} h_i =H$.   \end{definition}

Divisor methods with rounding rules $f(k) = k + s$ for $s \in [0,1]$ are called {\it stationary}. These include Jefferson's (or D'Hondt's), Webster's (or Sainte-Lagu\"e's), and Adams' methods for $s=1$, $s=1/2$, and $s=0$, respectively.  Among stationary methods, Jefferson's and Webster's methods were used to apportion the US House of Representatives from 1791-1842 and from 1842-1852 and 1901-1941, respectively.  Non-stationary methods include Hill-Huntington's method---$f(k) = \sqrt{k(k+1)}$, which was used to apportion the US House from 1941-present---and Dean's method---$f(k) = \frac{k(k+1)}{k+0.5}$.   Among legislative houses using proportional representation, D'Hondt's and Sainte Lagu{\"e}'s methods are the most common. Balinski and Young \cite{BY} and Pukelsheim \cite{P}, respectively, provide the definitive account for both the mathematics and the history of the use of apportionment methods for the US House and for countries in the European Union.

Quota methods are the other well-known class of apportionment methods in which each state $i$'s quota or fair share of $H$, $q_i = H \frac{v_i}{V}$, is rounded to integers that sum to $H$.   The most well-known apportionment method is Hamilton's method, also called the greatest remainder method; it is one of a larger family of apportionment methods: shift-quota methods.  Shift-quota methods, which are based on a parameter $s \in [0, 1)$, replace the usual notion of quota with an  adjusted quota $q_i^s = (H+s)\frac{v_i}{V}$.   Let $\lfloor q_i^s \rfloor$ denote the quota rounded down to the nearest integer. Note that  $q_i^s -1 < \lfloor q_i^s \rfloor \le q_i^s$, which implies $H+s-n <\sum_i  \lfloor q_i^s \rfloor  \le H+s$, and so $H-n < \sum_i  \lfloor q_i^s \rfloor < H+1$. Integer constraints  tighten these inequalities to $H-(n-1) \le \sum_i \lfloor q_i^s  \rfloor  \le H$, ensuring that an initial allocation of  $\lfloor q_i^s \rfloor$ leaves at most $n-1$  remaining seats to distribute.  When $s=0$, the shift-quota method reduces to Hamilton's method. Other apportionments are introduced in subsequent sections.

\begin{definition}
Under the shift-quota method with parameter $s \in [0, 1)$, each state  receives an initial number of seats equal to $\lfloor  q_i^s \rfloor$. The remaining $H - \sum_i \lfloor  q^s_i \rfloor$ seats are given one each to the states with the highest fractional remainders $q_i^s- \lfloor  q_i^s \rfloor$.
 \end{definition}

\noindent {\bf Proportional Consistency and Other Properties.}  We introduce the properties of apportionment methods that are most relevant to our discussion. For additional  properties, see \cite{BY, JMW, P}.  The first property is homogeneity, which requires apportionments to be invariant under a uniform scaling of the state populations.

\begin{definition}
 A method is  {\it homogeneous} if $F(\mathbf v, H) = F(\lambda \mathbf v, H)$ for all $\lambda>0$.  
\end{definition} 

Thus if an apportionment method is homogeneous, we can refer interchangeably to $F(\mathbf v, H)$,  $F(\mathbf q, H)$ or $F(\mathbf p, H)$ where $\mathbf q$ and $\mathbf p$ are the vector of quotas $q_i$ and population proportions $p_i=v_i/V$, respectively.  The next two properties are weak proportionality and proportional consistency.  Weak proportionality requires an apportionment method to return the quotas when all quotas are integers.

\begin{definition}
$F$ is {\it weakly proportional}  if whenever $q_i$ is an integer for all $i$, then $F(\mathbf v, H) = \mathbf q$. 
\end{definition} 
 
\begin{definition} A method is   {\it proportionally consistent}  if whenever  $F(\mathbf v,  H)= \mathbf h$ and for all rational $\lambda<1$ such that $\lambda h_i$  are integers for every $i$, then $F(\mathbf v, \lambda H)= \lambda \mathbf h$.
\end{definition}

Proportional consistency means, for instance, that if an apportionment method allocates $24$ seats to 4 states by $\mathbf h = (9, 6, 6, 3)$, then it should allocate $\frac{2}{3}(24)=16$ seats by $\frac{2}{3}\mathbf h = (6, 4, 4, 2)$  and $\frac{1}{3}(24)=8$ seats by $\frac{1}{3}\mathbf h = (3, 2, 2, 1)$.  The scaling should work in only one direction: if $48$ seats were apportioned instead, we do not want to require that the apportionment to be $(18, 12, 12, 6)$. Intuitively, more seats should allow for the possibility that another apportionment could be closer proportionally.

Balinski and Young \cite{BY} define an apportionment method to be {\it proportional} if it satisfies homogeneity, weak proportionality and proportional consistency.  In Sections 3 and 4, we examine which methods satisfy proportional consistency.  The methods defined above are proportional, but we'll see other methods---including ones used in delegate apportionment---that are not proportionally consistent.

\section{Divisor and Quota Methods and Proportional Consistency}\label{quota}

Let's begin this section by proving that well-known divisor methods are proportionally consistent. Because the methods of Adams, Webster and Jefferson are stationary methods, they are included in the following proposition.

\begin{proposition}  \textcolor{white}{newline}
\vspace{-2mm}
\begin{enumerate}[label=(\roman*)]
\item Stationary divisor, the Hill-Huntington, and the Dean methods are proportionally consistent.

\item Not all divisor methods are proportionally consistent. \end{enumerate}
\end{proposition}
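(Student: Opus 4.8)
\noindent\emph{Proof plan.} The plan is to reduce both parts to the ``no beneficial transfer'' reformulation of the divisor condition. Writing $f$ for the rounding rule, $F(\mathbf v,H)=\mathbf h$ holds exactly when
\[
v_i\,f(h_j)\ \ge\ v_j\,f(h_i-1)\qquad\text{for all $i$ with $h_i>0$ and all $j$,}
\]
since this is precisely the statement that $\min_{h_i>0}v_i/f(h_i-1)\ge\max_j v_j/f(h_j)$, i.e.\ that a divisor $x$ as in the definition exists (reading $v/0$ as $+\infty$). As a divisor apportionment is determined up to ties, in each part it suffices to verify this condition for the claimed apportionment.

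For (i), suppose $F(\mathbf v,H)=\mathbf h$ and let $\lambda\in(0,1)$ be rational with every $\lambda h_i$ an integer (note this already forces each $h_i$ to be $0$ or at least $2$). Then $\lambda h_i>0\iff h_i>0$, and $\lambda h_i\le h_i-1$ whenever $h_i>0$. Fixing $i$ with $h_i>0$ and any $j$, I need $v_i f(\lambda h_j)\ge v_j f(\lambda h_i-1)$. When $h_j=0$ this follows from $v_i f(0)\ge v_j f(h_i-1)\ge v_j f(\lambda h_i-1)$, using that $f$ is nondecreasing. When $h_j\ge1$ (so $\lambda h_j\ge1$), dividing by $v_j f(h_j)>0$ and invoking the hypothesis $v_i f(h_j)\ge v_j f(h_i-1)$ reduces the claim to the scalar inequality
\[
f(h_i-1)\,f(\lambda h_j)\ \ge\ f(\lambda h_i-1)\,f(h_j).
\]
Writing $a=h_i$, $b=h_j$, $c=\lambda h_i$, $d=\lambda h_j$ — so $a\ge c\ge1$, $b\ge d\ge1$ and $ad=bc$ — this reads $f(a-1)f(d)\ge f(c-1)f(b)$, which I would verify family by family, each time collapsing to something trivial: for a stationary rule $f(k)=k+s$ the difference of the two sides equals $(1-\lambda)\bigl((1-s)h_j+s\,h_i\bigr)\ge0$; for Hill--Huntington, squaring and using $ad=bc$ reduces it to $(a-1)(d+1)\ge(c-1)(b+1)$, whose difference is $(a-c)+(b-d)\ge0$; and for Dean, clearing denominators and cancelling common terms via $ad=bc$ reduces it to $(2c-1)(2b+1)\ge2(c-1)(b+1)$, i.e.\ $2bc+1\ge0$.

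For (ii), the idea is to reverse-engineer an offending rounding rule from the argument above, which used only comparisons among the values of $f$ at a few small arguments. I would take $f$ strictly interior and nondecreasing (hence an admissible rounding rule) with $f(0)=\tfrac{1}{2}$, $f(1)=1.9$, $f(2)=2.5$, $f(3)=3.1$, $f(4)=4.1$, $f(5)=5.5$, $f(6)=6.5$, $f(7)=7.1$, $f(8)=8.9$, and $f(k)=k+\tfrac{1}{2}$ for $k\ge9$; then with $\mathbf v=(5,2)$, $H=12$ and $\lambda=\tfrac{1}{2}$, a direct check of the transfer condition gives $F(\mathbf v,12)=(8,4)$, while $(4,2)=\tfrac{1}{2}(8,4)$ fails it for $H=6$ (transferring a seat from state $2$ to state $1$ strictly helps, since $v_1/f(4)>v_2/f(1)$), and in fact $F(\mathbf v,6)=(5,1)\ne(4,2)$. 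So this divisor method is not proportionally consistent.

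The arithmetic in (i) should be short once the transfer characterization is set up; the only mild care is the boundary case $h_j=0$ and the harmless possibility that $f(\lambda h_i-1)$ vanishes. I expect the genuinely non-routine step to be (ii): one must isolate which monotonicity/convexity-type feature of the named rounding rules drives (i), and then engineer a rule violating it in an example small enough to check by hand.
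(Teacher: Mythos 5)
Your overall strategy matches the paper's: part (i) is reduced to scalar inequalities comparing the rounding rule at an argument and at its $\lambda$-scaled version, and part (ii) is settled by an ad hoc rounding rule obtained by perturbing a stationary rule at a few integers. Your two-state counterexample in (ii) checks out: listing the priority values $v_i/f(k)$ gives $F(\mathbf v,12)=(8,4)$ and $F(\mathbf v,6)=(5,1)$, and $v_1/f(4)=5/4.1>2/1.9=v_2/f(1)$ rules out $(4,2)$, so the method fails proportional consistency with $\lambda=1/2$. The one real difference in (i) is that you prove the coupled inequality $f(h_i-1)f(\lambda h_j)\ge f(\lambda h_i-1)f(h_j)$ pair by pair, whereas the paper proves the two decoupled inequalities $\lambda f(h)\le f(\lambda h)$ and $\lambda f(h-1)\ge f(\lambda h-1)$ and chains the min--max condition through the single constant $\lambda$; the decoupled form is (slightly) stronger and turns each verification into a one-variable monotonicity check.

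This difference matters for your Dean computation, which is the one step that does not hold up as written. The inequality you claim it reduces to, $(2c-1)(2b+1)\ge 2(c-1)(b+1)$, contains no $a$ or $d$, so it cannot be the result of clearing denominators and cancelling via $ad=bc$ in $f(a-1)f(d)\ge f(c-1)f(b)$ for $f(k)=2k(k+1)/(2k+1)$: that manipulation leaves the genuinely four-variable inequality $(a-1)(d+1)(2c-1)(2b+1)\ge(c-1)(b+1)(2a-1)(2d+1)$, which is true but not trivial in that form. The clean fix is the paper's decoupled route: for Dean, $f(\lambda h)/f(h)\ge\lambda$ amounts to $t\mapsto(t+1)/(t+1/2)$ being decreasing, and $f(\lambda h-1)/f(h-1)\le\lambda$ amounts to $t\mapsto(t-1)/(t-1/2)$ being increasing, both immediate. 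Your stationary and Hill--Huntington reductions are correct as stated, and you handle the boundary cases ($h_j=0$, $f(0)=0$, $f(\lambda h_i-1)=0$) properly, so this is a local, repairable slip rather than a flaw in the approach.
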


\begin{proof}
(i) If $F(\mathbf p, H) = \mathbf h$, then there is an $x>0$ such that 
$\min_{h_i>0} \frac{v_i}{\lambda f(h_i-1)} \ge \frac{x}{\lambda} \ge \max_{h_j \ge 0}  \frac{v_j}{\lambda f(h_j)}.$
It  suffices to show that
$\lambda f(h-1) \ge f(\lambda h-1)$ for all $h > 0$ and $\lambda  f(h) \le f(\lambda h)$ for all $h \ge 0$, 
since this implies
$$\min_{h_i>0} \frac{v_i}{f(\lambda h_i-1)} \ge  \min_{h_i>0} \frac{v_i}{\lambda f(h_i-1)} \ge   x^\prime   \ge \max_{h_j \ge 0} \frac{v_j}{\lambda f(h_j)} \ge  \max_{h_j \ge 0}  \frac{v_j}{f(\lambda h_j)};$$
hence $F(\mathbf v,  \lambda H) = \lambda  \mathbf h$. These inequalities are easily verified for rounding functions for stationary divisor methods as well as for Dean's and Hill-Huntington's  methods.

(ii) For a counterexample, let $f(k)=k+0.5$ for $k= 15, 9, 3$ and $f(k)=k+1$ for all other values of $k$.
Then if  $H = 27$, $\lambda = 2/3$, and $\mathbf q = (\frac{46}{3}, \frac{25}{3}, \frac{10}{3})$ then  $F(\mathbf q, 27) = (15,9,3)$  but $F(\mathbf q, 18) =  (11,5,2) \ne (10,6,2)$. \end{proof}

There is no formal mathematical distinction between quota-based and non-quota apportionment methods since any homogeneous apportionment method can be described in terms of quotas by a change of variables. But for convenience, we  loosely define a quota-based apportionment method as one that is easiest to define in terms of quotas, or which requires calculating the quotas as a first step and then rounding the quotas.  Let $\lceil  \cdot \rceil$ and $[ \cdot ]$ denote the ceiling and nearest integer rounding functions, respectively. The proof of the following lemma is straightforward.  

\begin{lemma}\label{quotarounding} For all $\lambda <1$, if $k_i$, $\lambda k_i$, and $t$ are positive integers, then
\vspace{-2mm}
 \begin{enumerate}[label=(\roman*)]
 \item   If $\lfloor  q_i \rfloor= k_i$ then $\lfloor \lambda q_i \rfloor= \lambda k_i$ and if $\lceil  q_i \rceil= k_i$ then $\lceil \lambda q_i \rceil= \lambda k_i.$ 

 \item If $[  q_i] =  k_i+t$ for some $t \ge 0$, then $\lambda k_i \le [\lambda q_i] \le \lambda k_i+t.$ If $[q_i] =  k_i-t$ for some $t \ge 0$, then $\lambda k_i-t \le [\lambda q_i] \le \lambda k_i.$ In both cases, if $\lambda \le 2t+1$ then $[\lambda q_i]=\lambda k_i$.
\item If  $[q_i] =  k_i$ then $[\lambda q_i]=\lambda  k_i.$
\end{enumerate}
\end{lemma}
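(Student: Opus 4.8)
The plan is to translate each of the three rounding conditions on $q_i$ into the statement that $q_i$ lies in a half-open interval of length $1$, multiply that interval through by $\lambda$, and then invoke the two standing hypotheses $0<\lambda<1$ and $\lambda k_i$ an integer. Scaling by $\lambda$ shrinks an interval of length $1$ to one of length $\lambda<1$, and a half-open interval of length less than $1$ anchored at the integer $\lambda k_i$ on the correct side contains no other integer, which pins down the floor, ceiling, or nearest integer of $\lambda q_i$. Throughout I take nearest-integer rounding to be round-half-up, so that $[x]=m$ exactly when $m-\tfrac12\le x<m+\tfrac12$; any other tie-breaking rule differs only at isolated points and leaves all the inequalities below intact.

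For (i): from $\lfloor q_i\rfloor=k_i$ we get $k_i\le q_i<k_i+1$, hence $\lambda k_i\le\lambda q_i<\lambda k_i+\lambda<\lambda k_i+1$; since $\lambda k_i$ is an integer, $\lfloor\lambda q_i\rfloor=\lambda k_i$. Dually, $\lceil q_i\rceil=k_i$ gives $k_i-1<q_i\le k_i$, so $\lambda k_i-1<\lambda k_i-\lambda<\lambda q_i\le\lambda k_i$, and integrality of $\lambda k_i$ forces $\lceil\lambda q_i\rceil=\lambda k_i$.

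For (ii) with $[q_i]=k_i+t$, $t\ge 0$: we have $k_i+t-\tfrac12\le q_i<k_i+t+\tfrac12$, hence $\lambda k_i+\lambda(t-\tfrac12)\le\lambda q_i<\lambda k_i+\lambda(t+\tfrac12)$. Because $\lambda<1$, the upper bound is below $\lambda k_i+t+\tfrac12$, so $[\lambda q_i]\le\lambda k_i+t$; and $\lambda(t-\tfrac12)>-\tfrac12$ (immediate for $t\ge 1$, since then $\lambda(t-\tfrac12)>0$; for $t=0$ it reads $-\tfrac{\lambda}{2}>-\tfrac12$, using $\lambda<1$), so $\lambda q_i>\lambda k_i-\tfrac12$ and $[\lambda q_i]\ge\lambda k_i$. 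The case $[q_i]=k_i-t$ is handled symmetrically. If moreover $\lambda(2t+1)\le 1$, equivalently $\lambda(t+\tfrac12)\le\tfrac12$, the upper bound sharpens to $\lambda q_i<\lambda k_i+\tfrac12$, which with $\lambda q_i>\lambda k_i-\tfrac12$ gives $[\lambda q_i]=\lambda k_i$. Part (iii) is the case $t=0$ (or, directly, $[q_i]=k_i$ yields $\lambda k_i-\tfrac12<\lambda q_i<\lambda k_i+\tfrac12$ at once).

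There is essentially no obstacle here; the content is bookkeeping with interval endpoints, which is why the statement is \emph{straightforward}. The only points warranting attention are keeping each half-open endpoint on the side that certifies $\lambda k_i$ as the correct rounded value rather than an off-by-one neighbour, and the small case split ($t\ge1$ versus $t=0$) in the lower-bound step of (ii). I would also note that the threshold in the last sentence of (ii) is the one used above in the form $\lambda(2t+1)\le 1$, i.e.\ $\lambda\le\tfrac1{2t+1}$, which is precisely what makes the scaled interval narrow enough to force $[\lambda q_i]=\lambda k_i$.
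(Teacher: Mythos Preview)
Your proof is correct and follows essentially the same approach as the paper: convert each rounding condition to a half-open interval for $q_i$, scale by $\lambda$, and use $\lambda<1$ together with the integrality of $\lambda k_i$ to pin down the rounded value. Your treatment is in fact slightly more careful than the paper's (you spell out the $t=0$ versus $t\ge 1$ split and correctly read the threshold as $\lambda\le\tfrac{1}{2t+1}$), but the underlying argument is the same.
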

\begin{proof} For (i), if $\lfloor  q_i \rfloor= k_i$  then $k_i  \le   q_i<  k_i+1$  or $ \lambda k_i  \le \lambda  q_i < \lambda k_i+\lambda < k_i+1$, which implies $\lfloor \lambda q_i \rfloor =\lambda k_i$. The proof of the second part of (i) is similar. 

For (ii), if $[ q_i ] =  k_i+t$ then $ k_i+t-\frac{1}{2} \le  q_i <  k_i+t+\frac{1}{2}$ or $ \lambda k_i+\lambda (t-1/2) \le  \lambda q_i <  \lambda k_i+\lambda (t+1/2).$ This implies $\lambda k_i-\frac{1}{2} < \lambda q_i < \lambda k_i+t+ \frac{1}{2},$ unless  $\lambda \le 2t+1$, in which case $\lambda q_i$ is bounded by $\lambda k_i+1/2$. In the former case, $\lambda k_i \le [\lambda q_i] \le \lambda k_i+t$; in the latter case,  $[\lambda q_i] = \lambda k_i$. The proof of the second part of (ii) is similar. Finally, statement (iii) follows from (ii).
\end{proof}

An apportionment method is said to {\it satisfy quota} if,  for each $i$, $h_i \in \{ \lfloor q_i \rfloor,  \lceil q_i \rceil\}$.
\begin{lemma}\label{sqrounding}
If $F$ satisfies quota, $F(\mathbf v,  H) =  \mathbf h$, and for some $\lambda<1$,  the $ \lambda h_i$ are integers, then $q_i$ is an integer if and only if $\lambda q_i$ is an integer.
\end{lemma}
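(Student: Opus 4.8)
The plan is to reduce the statement to one elementary fact---that two integers at distance less than $1$ are equal---using only that ``$F$ satisfies quota'' keeps each $h_i$ within distance $1$ of $q_i$. First I would observe that, since $h_i\in\{\lfloor q_i\rfloor,\lceil q_i\rceil\}$, we have $|h_i-q_i|<1$ for every $i$: when $q_i$ is not an integer the two candidate values bracket $q_i$ and each lies strictly within $1$ of it, while when $q_i$ is an integer $\lfloor q_i\rfloor=\lceil q_i\rceil=q_i$, so in fact $h_i=q_i$. Multiplying through by the positive rational $\lambda<1$ then gives $|\lambda h_i-\lambda q_i|=\lambda\,|h_i-q_i|<\lambda<1$.

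For the forward implication, if $q_i$ is an integer then, as just noted, quota forces $h_i=q_i$, so $\lambda q_i=\lambda h_i$, which is an integer by hypothesis. For the reverse implication, if $\lambda q_i$ is an integer, then since $\lambda h_i$ is also an integer and the two differ by less than $1$, they are equal; as $\lambda>0$ this forces $q_i=h_i$, an integer.

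I expect no genuine obstacle, only a choice of route. One could instead try to appeal to Lemma \ref{quotarounding}, but its hypotheses require the relevant quantities to be \emph{positive} integers, forcing the cases $q_i=0$ and $h_i=0$ to be handled separately; the direct ``distance less than $1$'' argument sidesteps this and uses no particular rounding rule, relying only on $h_i$ being one of the two integers nearest $q_i$.
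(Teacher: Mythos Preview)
Your proof is correct and follows essentially the same route as the paper: both use that satisfying quota forces $|h_i-q_i|<1$, scale by $\lambda<1$, and conclude via the fact that two integers at distance less than $1$ must coincide. The only cosmetic differences are that the paper invokes Lemma~\ref{quotarounding}(i) for the forward direction (which, as you observe, is unnecessary once $h_i=q_i$ is established) and argues the reverse direction by contrapositive rather than directly.
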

\begin{proof}
If $q_i$ is an integer then $\lfloor  q_i \rfloor = \lceil  q_i \rceil$ and, since $F$ satisfies quota,  $\lfloor  q_i \rfloor = \lceil  q_i \rceil =  h_i$. From  Lemma \ref{quotarounding}(i), this implies $ \lambda h_i = \lfloor  \lambda q_i \rfloor = \lceil  \lambda q_i \rceil$, so that $\lambda q_i$ is an integer.

Alternatively, if $q_i$ is not an integer, then either $\lfloor  q_i \rfloor= h_i$ or $\lceil  q_i \rceil= h_i$. Without lost of generality, assume  $\lfloor  q_i \rfloor= h_i$. Hence, $h_i < q_i < h_i+1$ and so   $ \lambda h_i < \lambda q_i < \lambda h_i+\lambda  < h_i+1.$ Thus $ \lambda q_i$ is not an integer.
\end{proof}

To investigate which quota methods satisfy proportional consistency,  suppose  $F$ satisfies quota and let  $L_{(\mathbf v,  H)} = \{i \mid  h_i = \lfloor  q_i \rfloor\}$ and $U_{(\mathbf v,  H)} = \{i  \notin L_{( \mathbf v, H)} \mid   h_i = \lceil   q_i  \rceil\}$ be the set of states who receive their lower quota or upper quotas, respectively. Note that  $L_{ (\mathbf v, H)}$  contains those states whose quotas are integers so that $L_{(\mathbf v, H)}$ and $U_{(\mathbf v, H)}$ partition the set of states. Define $L_{(\mathbf v, \lambda H)}$ and $U_{(\mathbf v, \lambda H)}$ analogously. 

Suppose $F(\mathbf v, H) = \mathbf h$ and the $\lambda h_i$ are integers. By Lemma \ref{quotarounding},  $i \in  L_{(\mathbf v,  H)}$ implies  $\lfloor  q_i \rfloor =  h_i$ and $\lfloor  \lambda q_i \rfloor =  \lambda h_i$. Similarly,   $i \in  U_{(\mathbf v,  H)}$ implies $\lceil  q_i \rceil =  h_i$ and $\lceil  \lambda q_i \rceil  = \lambda h_i$. Since $\lambda q_i$ is not an integer, this implies $\lfloor \lambda q_i \rfloor = \lambda h_i-1$. So
 $$\sum_i \lfloor \lambda q_i \rfloor =\sum_{i \in L_{(\mathbf v,  H)}}  \!\! \lfloor \lambda q_i \rfloor+ \sum_{i \in U_{(\mathbf v,  H)}} \!\! \lfloor \lambda q_i \rfloor=  \sum_{i \in L_{(\mathbf v,  H)}} \!\! \lambda h_i+ \sum_{i \in U_{(\mathbf v,  H)}} (\lambda h_i-1) =H- \vert U_{(\mathbf v,  H)} \vert.$$
But, if $F(\mathbf v, \lambda H) = \mathbf h^\prime$, then
$$\sum_i \lfloor \lambda q_i \rfloor = \!\!\! \sum_{i \in L_{(\mathbf v, \lambda H)}} \!\! \lfloor \lambda q_i \rfloor + \!\!\! \sum_{i \in U_{(\mathbf v, \lambda H)}} \!\! \lfloor \lambda q_i \rfloor =  \!\!\! \sum_{i \in L_{(\mathbf v, \lambda  H)}} \!\! h_i^\prime + \!\!\! \sum_{i \in U_{(\mathbf v, \lambda H) }} \!\! (h_i^\prime-1) = H -  \vert U_{(\mathbf v, \lambda H) }  \vert,$$
so that $\vert U_{(\mathbf v, \lambda H)}  \vert =  \vert U_{(\mathbf v, H)}  \vert$ and $\vert L_{(\mathbf v, \lambda H)}  \vert =  \vert L_{(\mathbf v, H)}  \vert$. Thus if a method satisfies quota, the same number of states receive their quota rounded up (down) after scaling by $\lambda$.

\begin{proposition}\label{quotamethod}
If $F$ satisfies quota then

(i) if $F$ is proportionally consistent, $F(\mathbf v,  H) = \mathbf h,$  and for some $\lambda <1$  the $\lambda h_i$ are integers, then  $L_{( \mathbf v, H)} = L_{(\mathbf v, \lambda H)}$ and $U_{( \mathbf v, \lambda H)} = U_{(\mathbf v, H)}$; and

(ii)  if, for every $\mathbf v$ and $H$, where $F(\mathbf v, \lambda H) =\lambda \mathbf h$ and the $\lambda h_i$ are integers  for some $\lambda <1$,   $L_{( \mathbf v, \lambda H)} = L_{(\mathbf v, H)}$ and $U_{( \mathbf v, \lambda H)} = U_{(\mathbf v, H)}$, then  $F$ is proportionally consistent.
\end{proposition}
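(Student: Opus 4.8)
The plan is to deduce both implications from the displayed identity preceding the proposition together with Lemma~\ref{quotarounding}(i) and Lemma~\ref{sqrounding}, using the fact that for a method satisfying quota the apportionment at house size $\lambda H$ is entirely determined once one knows which states receive their lower quota and which receive their upper quota. Write $F(\mathbf v,\lambda H)=\mathbf h'$; recall that for any input the sets $L$ and $U$ partition $\{1,\dots,n\}$, with every state whose quota is an integer lying in the $L$-set, and that the preamble already showed $|L_{(\mathbf v,\lambda H)}|=|L_{(\mathbf v,H)}|$ and $|U_{(\mathbf v,\lambda H)}|=|U_{(\mathbf v,H)}|$ whenever the $\lambda h_i$ are integers.

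For part (i), proportional consistency gives $\mathbf h'=\lambda\mathbf h$, i.e.\ $h_i'=\lambda h_i$ for all $i$, and I would prove the two containments $L_{(\mathbf v,H)}\subseteq L_{(\mathbf v,\lambda H)}$ and $U_{(\mathbf v,H)}\subseteq U_{(\mathbf v,\lambda H)}$; since both pairs partition the same set, these containments force equality. If $i\in L_{(\mathbf v,H)}$ then $h_i=\lfloor q_i\rfloor$, so Lemma~\ref{quotarounding}(i) gives $\lfloor\lambda q_i\rfloor=\lambda h_i=h_i'$ and hence $i\in L_{(\mathbf v,\lambda H)}$. If $i\in U_{(\mathbf v,H)}$ then $h_i=\lceil q_i\rceil$ and $q_i$ is not an integer; Lemma~\ref{quotarounding}(i) gives $\lceil\lambda q_i\rceil=\lambda h_i=h_i'$, while Lemma~\ref{sqrounding} gives that $\lambda q_i$ is not an integer, so $h_i'=\lceil\lambda q_i\rceil\ne\lfloor\lambda q_i\rfloor$ and $i\in U_{(\mathbf v,\lambda H)}$.

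For part (ii), the hypothesis hands us $L_{(\mathbf v,\lambda H)}=L_{(\mathbf v,H)}$ and $U_{(\mathbf v,\lambda H)}=U_{(\mathbf v,H)}$, and it remains to check $\mathbf h'=\lambda\mathbf h$ coordinatewise. If $i\in L_{(\mathbf v,H)}=L_{(\mathbf v,\lambda H)}$ then $h_i=\lfloor q_i\rfloor$ and $h_i'=\lfloor\lambda q_i\rfloor$, and Lemma~\ref{quotarounding}(i) identifies $\lfloor\lambda q_i\rfloor=\lambda h_i$, so $h_i'=\lambda h_i$. If $i\in U_{(\mathbf v,H)}=U_{(\mathbf v,\lambda H)}$ then $h_i=\lceil q_i\rceil$ and $h_i'=\lceil\lambda q_i\rceil$, and Lemma~\ref{quotarounding}(i) identifies $\lceil\lambda q_i\rceil=\lambda h_i$, so again $h_i'=\lambda h_i$. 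Hence $F(\mathbf v,\lambda H)=\lambda\mathbf h$; as $(\mathbf v,H,\lambda)$ was arbitrary subject to the $\lambda h_i$ being integers, $F$ is proportionally consistent.

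I do not expect a genuine obstacle here: all the arithmetic is absorbed into Lemmas~\ref{quotarounding} and~\ref{sqrounding} and the preceding cardinality computation, and what is left is the routine observation that ``$h_i=\lfloor q_i\rfloor$'' and ``$h_i'=\lfloor\lambda q_i\rfloor$'' are linked precisely by $\lfloor\lambda q_i\rfloor=\lambda\lfloor q_i\rfloor$ (and likewise with ceilings). The only spot needing a moment's attention is that Lemma~\ref{quotarounding}(i) is stated for positive integers, so the degenerate case $h_i=0$ — which can occur only for $i\in L_{(\mathbf v,H)}$, where $\lfloor q_i\rfloor=0$ forces $q_i<1$, hence $\lambda q_i<1$ and $\lfloor\lambda q_i\rfloor=0=\lambda h_i$ — must be handled by hand, while for $i\in U_{(\mathbf v,H)}$ one notes $h_i=\lceil q_i\rceil\ge 1$ since $q_i=Hv_i/V>0$, so the lemma applies there without fuss.
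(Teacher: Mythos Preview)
Your proof is correct and follows essentially the same route as the paper's: for (i), prove the two containments $L_{(\mathbf v,H)}\subseteq L_{(\mathbf v,\lambda H)}$ and $U_{(\mathbf v,H)}\subseteq U_{(\mathbf v,\lambda H)}$ via Lemma~\ref{quotarounding}(i) and the non-integrality of $\lambda q_i$, then conclude equality from the partition property; for (ii), read off $h_i'=\lambda h_i$ coordinatewise from the same lemma. You are a bit more explicit than the paper in invoking Lemma~\ref{sqrounding} for the non-integrality step and in handling the $h_i=0$ edge case, but the underlying argument is identical.
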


\begin{proof}
(i) Let $F$ satisfy the hypotheses and $F(\mathbf v, \lambda H) = \lambda \mathbf h$.  By Lemma \ref{quotarounding},  if $i \in L_{(\mathbf v,  H)}$,  then  $\lfloor  q_i \rfloor=h_i$ and $\lfloor \lambda q_i \rfloor = \lambda h_i$ so that  $i \in L_{(\mathbf v,\lambda H)}$.  Likewise, if  $i \in U_{(\mathbf v,  H)}$ then $\lceil q_i \rceil = h_i$ and $\lceil \lambda q_i \rceil = \lambda h_i$. As $q_i$ is not an integer, neither is $\lambda q_i$. Hence $i \in U_{( \mathbf v,\lambda H)}$. 

(ii) Let $F(\mathbf v,  H) = \mathbf h$, the $\lambda h_i$ be integers and $L_{(\mathbf v, \lambda H)} = L_{(\mathbf v, H)}$ and $U_{(\mathbf v, \lambda H)} = U_{(\mathbf v,H)}$.
Let $F(\mathbf v, \lambda H) = \mathbf h^\prime.$   If $i \in L_{(\mathbf v,  H)}$, then $\lambda h_i=\lfloor \lambda q_i \rfloor = h_i^\prime,$ since $i \in L_{(\mathbf v,\lambda H)}$.  Likewise, if $i \in U_{(\mathbf v,  H)} $, then $\lambda h_i=\lceil \lambda q_i \rceil = h_i^\prime,$ since $i \in U_{(\mathbf v,\lambda H)}$. Hence $\mathbf h^\prime =\lambda \mathbf h.$  \end{proof}

Prop. \ref{quotamethod} implies that, for any method  satisfying quota, proportional consistency is equivalent to the condition that the same states   receive their upper (or lower) quota after scaling by $\lambda <1$.   This property is true for a number of quota-based methods, including shift-quota methods. Prop. \ref{quotarounding} and Prop. \ref{quotamethod} remain true if  all references to quota and $q_i$ are replaced with the shifted quota and $q_i^s$. To show these methods are proportional, it suffices to show that the same states receive their shifted quota rounded up or down after scaling.  For $s \in [0, 1),$ let $L^s_{(\mathbf v, \lambda H)}$ and $U^s_{(\mathbf v, \lambda H)}$ denote the set of states who receive their lower (resp. upper) shifted quota. 

\begin{proposition} The shift-quota methods are proportionally consistent.
\end{proposition}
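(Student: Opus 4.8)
The plan is to use the machinery already built up: by the remark immediately preceding this proposition (the analogue of Proposition~\ref{quotamethod} for shifted quotas), it suffices to show that when $F$ is the shift-quota method with parameter $s$, $F(\mathbf v, H) = \mathbf h$, and $\lambda < 1$ is rational with all $\lambda h_i$ integers, then the same states receive their upper shifted quota before and after scaling, i.e.\ $U^s_{(\mathbf v,\lambda H)} = U^s_{(\mathbf v,H)}$ (equivalently $L^s_{(\mathbf v,\lambda H)} = L^s_{(\mathbf v,H)}$). So I would first invoke Lemma~\ref{quotarounding}(i) (applied to $q_i^s$ in place of $q_i$) to get, for $i \in L^s_{(\mathbf v,H)}$, $\lfloor \lambda q_i^s \rfloor = \lambda \lfloor q_i^s \rfloor = \lambda h_i$, and for $i \in U^s_{(\mathbf v,H)}$, $\lceil \lambda q_i^s \rceil = \lambda \lceil q_i^s \rceil = \lambda h_i$ with $\lambda q_i^s$ non-integral (by the shifted-quota analogue of Lemma~\ref{sqrounding}), hence $\lfloor \lambda q_i^s \rfloor = \lambda h_i - 1$.

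Next I would pin down which states get the leftover seats. Under the definition of the shift-quota method, the $H - \sum_i \lfloor q_i^s \rfloor$ leftover seats go to the states with the largest fractional parts $\langle q_i^s \rangle := q_i^s - \lfloor q_i^s \rfloor$; so $U^s_{(\mathbf v,H)}$ is exactly the set of states achieving the top $|U^s_{(\mathbf v,H)}|$ fractional parts (ties aside, which we ignore as usual). The key point is that scaling by $\lambda$ is \emph{order-preserving on the fractional parts in question}: for $i \in L^s_{(\mathbf v,H)}$ we have $\langle \lambda q_i^s \rangle = \lambda q_i^s - \lambda\lfloor q_i^s\rfloor = \lambda \langle q_i^s\rangle$, and for $i \in U^s_{(\mathbf v,H)}$ we have $\langle \lambda q_i^s \rangle = \lambda q_i^s - (\lambda\lfloor q_i^s\rfloor + (\lambda - 1)) = \lambda\langle q_i^s\rangle + (1 - \lambda)$. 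Since $0 < \langle q_i^s \rangle < 1$, the first expression lies in $(0,\lambda)$ and the second in $(1-\lambda, 1)$, so every state in $U^s_{(\mathbf v,H)}$ has a strictly larger scaled fractional part than every state in $L^s_{(\mathbf v,H)}$. Now $\sum_i \lfloor \lambda q_i^s \rfloor = H - |U^s_{(\mathbf v,H)}|$ (computed exactly as in the displayed computation before Proposition~\ref{quotamethod}), so the shift-quota method applied to $(\mathbf v, \lambda H)$ hands out $|U^s_{(\mathbf v,H)}|$ leftover seats, and by the previous sentence those go precisely to the states of $U^s_{(\mathbf v,H)}$. Hence $U^s_{(\mathbf v,\lambda H)} = U^s_{(\mathbf v,H)}$, and the shifted-quota analogue of Proposition~\ref{quotamethod}(ii) gives $F(\mathbf v, \lambda H) = \lambda \mathbf h$.

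I expect the main obstacle to be bookkeeping rather than conceptual: one must be careful that the two cases for the scaled fractional parts really do separate into disjoint intervals $(0,\lambda)$ and $(1-\lambda,1)$ for \emph{all} $\lambda \in (0,1)$ — which they do, since $(0,\lambda)$ and $(1-\lambda,1)$ are disjoint whenever $\lambda \le 1-\lambda$ fails... but in fact the relevant fact is only that $\max\{\lambda\langle q_i^s\rangle : i\in L\} < \min\{\lambda\langle q_j^s\rangle + (1-\lambda) : j\in U\}$, and since $\lambda\langle q_i^s\rangle < \lambda$ while $\lambda\langle q_j^s\rangle + (1-\lambda) > 1-\lambda + 0 \cdot \lambda$... so one genuinely needs $\lambda < 1 - \lambda$, i.e.\ $\lambda < 1/2$, OR a finer argument. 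The correct finer argument: within $U^s_{(\mathbf v,H)}$ and within $L^s_{(\mathbf v,H)}$ the map $t \mapsto \lambda t$ is strictly increasing, so relative order is preserved inside each block; and the leftover count $H - \sum\lfloor\lambda q_i^s\rfloor = |U^s_{(\mathbf v,H)}|$ exactly matches the size of $U^s_{(\mathbf v,H)}$, so one needs only that the \emph{smallest} fractional part among the $U$-states exceeds the \emph{largest} among the $L$-states after scaling — and here we do use that $\langle q_j^s\rangle > \langle q_i^s\rangle$ for $j\in U$, $i\in L$ (this holds because $U$ received the top remainders), together with the fact that passing to $\lambda q^s$ shifts the $U$-remainders up by $1-\lambda$ and the $L$-remainders up by $0$; so the $U$-remainders, already larger, get an additional boost, and the ordering is preserved \emph{a fortiori}. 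That resolves the gap; I would phrase this carefully in the writeup.

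\begin{proof}
By the remark preceding the statement, Proposition~\ref{quotamethod} holds verbatim for shift-quota methods with $q_i$ replaced by $q_i^s$ and $L,U$ replaced by $L^s, U^s$. Thus by part~(ii) of that proposition it suffices to show: if $F$ is the shift-quota method with parameter $s$, $F(\mathbf v, H) = \mathbf h$, and $\lambda < 1$ is such that all $\lambda h_i$ are integers, then $L^s_{(\mathbf v,\lambda H)} = L^s_{(\mathbf v,H)}$ and $U^s_{(\mathbf v,\lambda H)} = U^s_{(\mathbf v,H)}$.

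Write $r_i = q_i^s - \lfloor q_i^s \rfloor$ for the fractional remainders. By the shifted-quota version of Lemma~\ref{sqrounding}, $q_i^s$ is an integer iff $\lambda q_i^s$ is; and for $i \in U^s_{(\mathbf v,H)}$ we have $0 < r_i < 1$. By Lemma~\ref{quotarounding}(i), for $i \in L^s_{(\mathbf v,H)}$ we get $\lfloor \lambda q_i^s \rfloor = \lambda \lfloor q_i^s \rfloor = \lambda h_i$, while for $i \in U^s_{(\mathbf v,H)}$ we get $\lceil \lambda q_i^s \rceil = \lambda \lceil q_i^s \rceil = \lambda h_i$, and since $\lambda q_i^s \notin \mathbb Z$, $\lfloor \lambda q_i^s \rfloor = \lambda h_i - 1$. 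Therefore, exactly as in the computation preceding Proposition~\ref{quotamethod},
\[
\sum_i \lfloor \lambda q_i^s \rfloor \;=\; \sum_{i\in L^s_{(\mathbf v,H)}} \lambda h_i \;+\; \sum_{i\in U^s_{(\mathbf v,H)}} (\lambda h_i - 1) \;=\; \lambda H - \lvert U^s_{(\mathbf v,H)}\rvert,
\]
so the shift-quota method applied to $(\mathbf v,\lambda H)$ distributes exactly $\lvert U^s_{(\mathbf v,H)}\rvert$ leftover seats.

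It remains to check that those seats go to the states in $U^s_{(\mathbf v,H)}$, i.e.\ that these states have the $\lvert U^s_{(\mathbf v,H)}\rvert$ largest fractional remainders of $\lambda q_i^s$. For $i \in L^s_{(\mathbf v,H)}$ we computed $\lfloor \lambda q_i^s\rfloor = \lambda\lfloor q_i^s\rfloor$, so the remainder of $\lambda q_i^s$ is $\lambda q_i^s - \lambda\lfloor q_i^s\rfloor = \lambda r_i \in [0,\lambda)$. For $i \in U^s_{(\mathbf v,H)}$ we computed $\lfloor \lambda q_i^s\rfloor = \lambda\lfloor q_i^s\rfloor - (1-\lambda)$, so the remainder of $\lambda q_i^s$ is $\lambda q_i^s - \lambda\lfloor q_i^s\rfloor + (1-\lambda) = \lambda r_i + (1-\lambda) \in (1-\lambda, 1)$. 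Now $F(\mathbf v,H) = \mathbf h$ means the states of $U^s_{(\mathbf v,H)}$ were the ones with the largest remainders $r_i$ in the original apportionment, so $r_j \ge r_i$ for every $j \in U^s_{(\mathbf v,H)}$ and $i \in L^s_{(\mathbf v,H)}$; since $\lambda > 0$, it follows that
\[
\lambda r_j + (1-\lambda) \;>\; \lambda r_j \;\ge\; \lambda r_i \qquad \text{for all } j\in U^s_{(\mathbf v,H)},\ i\in L^s_{(\mathbf v,H)},
\]
so every remainder of $\lambda q_j^s$ with $j \in U^s_{(\mathbf v,H)}$ strictly exceeds every remainder of $\lambda q_i^s$ with $i \in L^s_{(\mathbf v,H)}$. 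Hence the $\lvert U^s_{(\mathbf v,H)}\rvert$ leftover seats are assigned precisely to the states of $U^s_{(\mathbf v,H)}$, giving $U^s_{(\mathbf v,\lambda H)} = U^s_{(\mathbf v,H)}$ and $L^s_{(\mathbf v,\lambda H)} = L^s_{(\mathbf v,H)}$. By Proposition~\ref{quotamethod}(ii) (in its shifted-quota form), $F(\mathbf v,\lambda H) = \lambda\mathbf h$, so the shift-quota method is proportionally consistent.
\end{proof}
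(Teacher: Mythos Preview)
Your proof is correct and follows essentially the same route as the paper's: both reduce via the shifted-quota analogue of Proposition~\ref{quotamethod} to showing $U^s_{(\mathbf v,H)} = U^s_{(\mathbf v,\lambda H)}$, and both establish this by checking that for $i\in L^s$, $j\in U^s$ the remainder inequality $q_i^s-\lfloor q_i^s\rfloor \le q_j^s-\lfloor q_j^s\rfloor$ survives multiplication by $\lambda$ (the paper does this in one line via $\lambda(h_j-1)\ge \lambda h_j-1$, while you unpack the scaled remainders as $\lambda r_i$ versus $\lambda r_j+(1-\lambda)$, which is the same computation). Your exposition is simply more explicit than the paper's three-line version.
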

\begin{proof}  
If $i \in L^s_{(\mathbf v,  H)}$ and $j \in U^s_{(\mathbf v,  H)}$, then $ q_i^s - \lfloor  q_i^s \rfloor \le  q_j^s - \lfloor  q^s_j \rfloor$ or
$ q^s_i - h_i= q^s_i - \lfloor  q^s_i \rfloor   \le  q^s_j - \lfloor  q^s_j \rfloor=  q^s_j - ( h_j -1).$
Multiplying by $\lambda$ yields   $ \lambda q^s_i - \lambda h_i \le  \lambda q^s_j -  \lambda (h_j-1) \le \lambda q^s_j -  (\lambda h_j-1)$ or $\lambda q^s_i - \lfloor \lambda q^s_i \rfloor \le \lambda q^s_j - \lfloor \lambda q^s_j \rfloor$.
 This implies that after rounding down, the remaining seats go the same states, and hence the shift-quota methods are proportionally consistent by Prop. \ref{quotamethod}. \end{proof}

While Hamilton's method is the best known apportionment  method satisfying quota, Prop. \ref{quotamethod} suggests that any quota method which first awards $\lfloor q_i \rfloor$ seats and then allocates the remaining seats in a consistent way will be proportional.  This leads to consideration of methods such as the following.

\begin{definition} Let each state receive an initial number of seats equal to $\lfloor  q_i \rfloor$.  Under the Large (LAR) method, the remaining seats are given one each to the  $H - \sum_i \lfloor  q_i \rfloor$ states with the largest populations whose quotas are not integers.  Under the Small (SML) method, the remaining seats are given one each to the  $H - \sum_i \lfloor  q_i \rfloor$ states with the smallest populations whose quotas are not integers. \end{definition}

\begin{proposition} \label{four} The LAR and SML methods are  proportional. 
\end{proposition}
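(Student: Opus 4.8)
The plan is to verify that the LAR and SML methods satisfy the three defining properties of proportionality: homogeneity, weak proportionality, and proportional consistency. Homogeneity is immediate, since both methods are defined entirely in terms of the quotas $q_i = H v_i/V$ and the ordering of populations, both of which are invariant under scaling $\mathbf v \mapsto \lambda \mathbf v$. (For ties in population we rely, as elsewhere in the paper, on the convention that such ties are broken consistently, e.g. by index.) Weak proportionality is also immediate: if every $q_i$ is an integer, then $\sum_i \lfloor q_i\rfloor = \sum_i q_i = H$, so there are no remaining seats to distribute, and each state receives exactly $\lfloor q_i\rfloor = q_i$ under both methods.

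The substance is proportional consistency, and here I would invoke Proposition~\ref{quotamethod}(ii). Since both LAR and SML clearly satisfy quota --- each state gets either $\lfloor q_i\rfloor$ or $\lfloor q_i\rfloor+1 = \lceil q_i\rceil$ (the latter only when $q_i\notin\mathbb Z$) --- it suffices to show that when $F(\mathbf v,H)=\mathbf h$ and the $\lambda h_i$ are integers for some rational $\lambda<1$, the sets of states receiving their upper quota before and after scaling coincide: $U_{(\mathbf v,\lambda H)} = U_{(\mathbf v,H)}$. By Lemma~\ref{quotarounding}(i), for every state $i$ we have $\lfloor \lambda q_i\rfloor = \lambda\lfloor q_i\rfloor = \lambda h_i$ when $i\in L_{(\mathbf v,H)}$, and $\lceil\lambda q_i\rceil = \lambda\lceil q_i\rceil = \lambda h_i$ when $i\in U_{(\mathbf v,H)}$; moreover by Lemma~\ref{sqrounding} (with $s=0$), $q_i\in\mathbb Z \iff \lambda q_i\in\mathbb Z$, so the set of states with non-integer quota is unchanged by scaling. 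The key observation is then that the priority ordering used to distribute the leftover seats --- ``largest population'' for LAR, ``smallest population'' for SML, restricted to states with non-integer quota --- depends only on $\mathbf v$ and not on $H$, hence is identical for $(\mathbf v,H)$ and $(\mathbf v,\lambda H)$. The argument preceding Proposition~\ref{quotamethod} shows $|U_{(\mathbf v,\lambda H)}| = |U_{(\mathbf v,H)}|$, i.e. the number of leftover seats is the same; since the same fixed priority list awards that same number of seats, the same states are selected, giving $U_{(\mathbf v,\lambda H)} = U_{(\mathbf v,H)}$ and $L_{(\mathbf v,\lambda H)} = L_{(\mathbf v,H)}$. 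Proposition~\ref{quotamethod}(ii) then yields proportional consistency, and combined with homogeneity and weak proportionality, both methods are proportional.

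The main obstacle --- really the only delicate point --- is confirming that the count of leftover seats $H - \sum_i\lfloor q_i\rfloor$ scales correctly under $\lambda$, since a priori one might worry that some state's quota crosses an integer threshold. This is exactly what is ruled out by Lemma~\ref{quotarounding}(i) together with the hypothesis that the $\lambda h_i = \lambda\lfloor q_i\rfloor$ (or $\lambda\lceil q_i\rceil$) are integers: the display computation in the paragraph before Proposition~\ref{quotamethod} shows $\sum_i\lfloor\lambda q_i\rfloor = H - |U_{(\mathbf v,H)}|$, so the number of seats still to be assigned after the initial allocation is $|U_{(\mathbf v,H)}|$ both before and after scaling. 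One should also note that ties in the priority ordering (equal populations among states with non-integer quota) are handled by the same tie-breaking convention in both instances, so the set-valued nature of $F$ causes no difficulty. Everything else is routine.
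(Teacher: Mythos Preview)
Your proof is correct and follows the same route the paper indicates: the paper's entire proof is the sentence ``follows directly from Lemma~\ref{quotarounding} and Prop.~\ref{quotamethod},'' and you have simply fleshed out exactly that argument---verifying quota, invoking the count $|U_{(\mathbf v,\lambda H)}|=|U_{(\mathbf v,H)}|$ from the displayed computation before Proposition~\ref{quotamethod}, observing that the LAR/SML priority ordering depends only on $\mathbf v$ (hence is stable under scaling $H\mapsto\lambda H$), and applying Proposition~\ref{quotamethod}(ii). Your added checks of homogeneity and weak proportionality are routine but necessary, since the proposition asserts proportionality rather than merely proportional consistency.
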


The proof of Prop. \ref{four} follows directly from Lemma \ref{quotarounding} and Prop. \ref{quotamethod}. The LAR and SML methods are clearly biased in the sense that they reward large or small states disproportionately. While such methods would not be appropriate for awarding seats to states in the US House, they may be relevant in other settings. The LAR method, for instance, is used by the US Republican Party in several states to award delegates to presidential candidates based on the candidates' share of the primary vote. In this context, it is considered advantageous to use an apportionment method that is slightly biased towards stronger candidates; see \cite{JMW}.  Rather than using population to select which states receive an additional seat, an alternate method that is also proportional (but not anonymous) is to designate a priority order of which states with non-integer quotas should receive one of the extra $H - \sum_i \lfloor  q_i \rfloor$ seats.  We turn to other types of quota methods.

Many quota-based methods can be defined by first rounding quotas up, down or to the nearest integer and then adjusting in ways that may not satisfy quota.  We consider several of these methods, determining which are proportionally consistent.

The Lower Quota Extremes (LQE) method takes the bias of the LAR method to the extreme. 

\begin{definition} Let each state receive an initial number of seats equal to $\lfloor  q_i \rfloor$. Under the Lower Quota Extremes method,  the remaining  $H - \sum_i \lfloor  q_i \rfloor$ seats are given to the largest state. \end{definition}

While the LQE method is even more biased than the LAR and SML methods, it is straightforward to prove the following. 

\begin{proposition} The LQE method is proportionally consistent.
\end{proposition}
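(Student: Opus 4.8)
The plan is to compute the LQE apportionment at house size $\lambda H$ directly and check that it equals $\lambda\mathbf h$, rather than invoking Prop.~\ref{quotamethod}, which does not apply here: LQE need not satisfy quota, since the largest state $S_1$ may receive many more than $\lceil q_1\rceil$ seats. Write $\mathbf h = F(\mathbf v, H)$, so that $h_i = \lfloor q_i\rfloor$ for every $i\ne 1$ and $h_1 = \lfloor q_1\rfloor + r$, where $r = H - \sum_i\lfloor q_i\rfloor \ge 0$ is the number of leftover seats (and $0 \le r \le n-1$, as noted in Section~\ref{prelims}). Assume $\lambda<1$ is rational and $\lambda h_i\in\mathbb Z$ for all $i$.

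First I would deal with the $n-1$ states other than $S_1$. For $i\ne 1$ we have $h_i = \lfloor q_i\rfloor$, so $h_i \le q_i < h_i+1$; multiplying by $\lambda\in(0,1)$ gives $\lambda h_i \le \lambda q_i < \lambda h_i+\lambda < \lambda h_i+1$, and since $\lambda h_i$ is an integer this forces $\lfloor\lambda q_i\rfloor = \lambda h_i$. (This is precisely Lemma~\ref{quotarounding}(i); the case $h_i=0$ is immediate.) Hence in the LQE apportionment for $(\mathbf v,\lambda H)$ every state except $S_1$ receives exactly its initial allocation $\lfloor\lambda q_i\rfloor = \lambda h_i$.

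It remains to pin down $S_1$'s allocation. Since the LQE apportionment for $(\mathbf v,\lambda H)$ sums to $\lambda H$ and $S_1$ absorbs all seats not assigned to the other states, $S_1$ receives $\lambda H - \sum_{j\ne 1}\lambda h_j = \lambda h_1$. This is consistent with the definition of LQE: the number of leftover seats is $\lambda H - \sum_j\lfloor\lambda q_j\rfloor = \lambda h_1 - \lfloor\lambda q_1\rfloor$, which is nonnegative because $\sum_j\lfloor\lambda q_j\rfloor \le \sum_j \lambda q_j = \lambda H$. Therefore $F(\mathbf v,\lambda H) = (\lambda h_1,\ldots,\lambda h_n) = \lambda\mathbf h$, so LQE is proportionally consistent.

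The only genuine subtlety is the one already flagged: because LQE violates quota at the largest state, one cannot simply quote Prop.~\ref{quotamethod}, as was done for the shift-quota, LAR, and SML methods. But LQE has exactly one ``free'' coordinate—$S_1$ takes every leftover seat—so verifying the remaining $n-1$ coordinates via Lemma~\ref{quotarounding}(i) and then using the constraint $\sum_i h_i = \lambda H$ is enough.
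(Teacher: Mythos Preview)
Your proof is correct and follows essentially the same approach as the paper: for $i\neq 1$ use Lemma~\ref{quotarounding}(i) to conclude $\lfloor\lambda q_i\rfloor=\lambda h_i$, then recover $S_1$'s allocation from the constraint $\sum_i h_i'=\lambda H$. Your added remarks---that Prop.~\ref{quotamethod} does not apply because LQE violates quota, and that the leftover-seat count at $\lambda H$ is nonnegative---are sound but not needed for the argument to go through.
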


\begin{proof} Suppose $F(\mathbf v H) =  \mathbf h$ and for some $\lambda < 1$ the $\lambda h_i$ are integers. Let  $F(\mathbf v \lambda H)  = \mathbf h^\prime$. Under LQE, $h_i = \lfloor  q_i \rfloor$ for all $i >1$. By Lemma \ref{quotarounding}, this implies $\lfloor \lambda q_i \rfloor = \lambda h_i=h_i^\prime$ for all $i>1$, and hence $h_1^\prime = \lambda H - \sum_{i >1} \lambda h_i= \lambda h_1$, which implies $\mathbf h^\prime=\lambda \mathbf h.$
\end{proof}

Similar arguments apply to methods based on upper quota rounding in which  each state initially receives  $\lceil q_i \rceil$ seats and then   $\sum_i \lceil q_i \rceil-H$ seats are removed in a systematic way.  One such example is the Sequential Upper Quota method, which, like LAR, is used by some Republican state parties to allocate delegates to candidates during the US presidential primaries; see \cite{JMW} for which states use this method.

\begin{definition} The Sequential Upper Quota (SUQ)  method initially assigns each state $\lceil q_i \rceil$ seats.   The $\sum_i \lceil q_i \rceil- H$ over-allocated seats are removed from the smallest state; if the smallest state does not have a sufficient number of seats, the rest are removed from the second-smallest state, and so on, until the house size is met.
\end{definition}

The SUQ method can alternatively be described as giving $\lceil q_i \rceil$ seats to states in descending order of size until no more seats are available.   

\begin{proposition} The SUQ method is proportionally consistent.
\end{proposition}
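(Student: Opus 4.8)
The plan is to argue directly from the descending-order description of SUQ. Note first that---like LQE, but unlike LAR---the SUQ method need not satisfy quota (the cutoff state can receive strictly fewer than its lower quota), so Prop.~\ref{quotamethod} does not apply. Write $F(\mathbf v, H) = \mathbf h$ and let $m$ be the least index with $\sum_{i \le m} \lceil q_i \rceil \ge H$; then SUQ produces $h_i = \lceil q_i \rceil$ for $i < m$, $h_i = 0$ for $i > m$, and $h_m = H - \sum_{i<m}\lceil q_i\rceil$, which satisfies $1 \le h_m \le \lceil q_m\rceil$ because $\sum_{i<m}\lceil q_i\rceil < H \le \sum_{i\le m}\lceil q_i\rceil$. (A tie can occur only when $v_m = v_{m+1}$ and $\sum_{i\le m}\lceil q_i\rceil = H$; it is handled as elsewhere in the paper, since the relevant equalities are preserved under multiplication by $\lambda$.)

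Now let $\lambda < 1$ be rational with every $\lambda h_i$ an integer; the goal is $F(\mathbf v, \lambda H) = \lambda \mathbf h$. For each $i < m$ we have $\lceil q_i \rceil = h_i$ with $h_i$ and $\lambda h_i$ positive integers, so Lemma~\ref{quotarounding}(i) gives $\lceil \lambda q_i \rceil = \lambda h_i$. Since $\sum_{i<m} h_i = \sum_{i<m}\lceil q_i\rceil < H$, it follows that $\sum_{i<m}\lceil \lambda q_i\rceil = \lambda \sum_{i<m} h_i < \lambda H$; hence, running SUQ at house size $\lambda H$, the states $1, \dots, m-1$ each receive their full ceiling $\lambda h_i$, and exactly $\lambda H - \lambda\sum_{i<m} h_i = \lambda h_m \ge 1$ seats remain when state $m$ is reached.

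It remains to show that state $m$ absorbs all $\lambda h_m$ of the remaining seats---equivalently that $\lceil \lambda q_m \rceil \ge \lambda h_m$. Since $h_m \le \lceil q_m\rceil < q_m + 1$, we get $q_m - h_m > -1$, and because $0 < \lambda < 1$ this yields $\lambda q_m - \lambda h_m > -\lambda > -1$, i.e.\ $\lceil \lambda q_m\rceil \ge \lambda h_m$. This is the only place the direction $\lambda < 1$ enters, and it is the heart of the matter: scaling down cannot move the cutoff and cannot push $\lambda h_m$ past the rescaled ceiling $\lceil\lambda q_m\rceil$. Consequently state $m$ receives $\lambda h_m$, the states $i > m$ receive $0 = \lambda h_i$, and $F(\mathbf v, \lambda H) = \lambda \mathbf h$, as required. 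The only real obstacle is the cutoff state: every other state is pinned down by Lemma~\ref{quotarounding}(i), whereas $h_m$ is only loosely constrained ($1 \le h_m \le \lceil q_m\rceil$, possibly below $\lfloor q_m\rfloor$), so one must verify separately both that the cutoff does not shift and that $\lceil\lambda q_m\rceil$ is large enough to accommodate $\lambda h_m$.
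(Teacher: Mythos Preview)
Your proof is correct and follows essentially the same line as the paper's: both use the descending-order description, invoke Lemma~\ref{quotarounding}(i) to pin down $\lceil \lambda q_i\rceil=\lambda h_i$ for $i<m$, and then verify $\lceil \lambda q_m\rceil \ge \lambda h_m$ at the cutoff so that the cutoff index is unchanged. The only cosmetic difference is that the paper handles the cutoff state by a two-case split (either $h_k=\lceil q_k\rceil$, whence Lemma~\ref{quotarounding}(i) applies directly, or $h_k<\lceil q_k\rceil$, whence $h_k\le\lfloor q_k\rfloor\le q_k$ gives $\lambda h_k\le \lambda q_k$), whereas you compress this into the single chain $h_m\le\lceil q_m\rceil<q_m+1\Rightarrow \lambda q_m-\lambda h_m>-\lambda>-1$.
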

\begin{proof} Let $F(\mathbf v,  H) =  \mathbf h$ and $\lambda <1$ be such that the $\lambda h_i$ are integers. Let  $F(\mathbf v, \lambda H)= \mathbf h^\prime$.
Assume $\mathbf h=( \lceil q_1 \rceil, \lceil q_2 \rceil, \ldots, \lceil q_{k-1} \rceil, H-\sum_{i<k} \lceil q_i \rceil, 0, \ldots, 0)$.
  If $ i >k$ then $h_i=0$. If $i < k$, then $\lceil  q_i \rceil =  h_i$ which, by Lemma \ref{quotarounding}, implies   $\lceil \lambda q_i \rceil = \lambda h_i$.   If   $\lceil  q_{k} \rceil= h_{k}$, then   $ \lceil \lambda q_k \rceil =\lambda h_k$; otherwise  $h_{k}< \lceil  q_{k} \rceil$ which implies, since $h_{k}$ is an integer, $h_{k} \le \lfloor  q_k \rfloor \le  q_k$, or  $\lambda h_{k} \le \lambda q_i \le \lceil \lambda q_k \rceil $. In either case, $ \lceil \lambda q_k \rceil \ge \lambda h_k$,  so
$$\sum_{i < k} \lceil \lambda q_{i} \rceil = \sum_{i < k} \lambda h_i \le \lambda H  = \sum_{i < k}\lambda h_i + \lambda h_{k} \le \sum_{i \le k}\lceil \lambda q_{i} \rceil.$$ 
Hence when apportioning $\lambda H$ seats,  the largest $k-1$ states receive $\lceil \lambda q_i \rceil$ seats, with the $k{\text{th}}$-largest state receiving the remaining seats.  So
\begin{equation*}
\mathbf h^\prime= (\lceil \lambda q_1 \rceil, \ldots, \lceil \lambda q_{k-1} \rceil, \lambda H-\sum_{i<k} \lceil \lambda q_1 \rceil, 0, \ldots, 0)
 = (\lambda h_1, \ldots \lambda h_{k-1}, h_k^\prime, 0, \ldots, 0)
\end{equation*}
where $h_k^\prime = \lambda H - \sum_{i \ne k}\lambda h_i = \lambda h_k$. Thus, SUQ is proportionally consistent. \end{proof}

Other apportionment methods are most easily described when quotas are rounded to the nearest integer and then allocations are adjusted. Note that since $q_i -1/2 < [q_i] \le q_i+ 1/2$, then  $H-n/2 < \sum_i [q_i]  \le H + n/2$. The upper bound is strict unless $n$ is even and $[q_i]=q_i+1/2$  (so the fractional part of the quota is equal to $0.5$) for all $i$. Thus, apart for this rare exception, $\vert \sum_i [q_i] - H \vert \le \lfloor (n-1)/2 \rfloor,$ and at most $\lfloor (n-1)/2 \rfloor$ seats can be under or over-allocated.

For comparison, we introduce two methods: Nearest Integer Extremes (NIE) and Nearest Integer Sequential (NIS), which are both used in the  US Republican Party presidential primary \cite{JMW}. These are similar to the LQE and LAR methods respectively: the NIE method corrects for under- or over-allocation by adjusting the apportionment of the largest or smallest state; the NIS method corrects  for under- or over-allocation  by adjusting the allocations of multiple large or small states.

\begin{definition} Under the Nearest-Integer Extremes (NIE) method,  each state initially receives $[q_i]$ seats. If $H \ge \sum_i [q_i]$, the remaining $c=H-\sum_i [q_i]$ seats are  given to state 1.  If  $H < \sum_i [q_i]$, the extra  $c = \sum_i [q_i]-H$ seats are removed from state $n$; if state $n$ does not have enough seats, additional seats are taken from state $n-1$, etc. \end{definition}

\begin{definition}
Under the Nearest-Integer Sequential (NIS) method,  each state initially receives $[q_i]$ seats. If $H \ge \sum_i [q_i]$, the remaining $c= H - \sum_i [q_i]$ seats are  given one each to states $i \le c$.   If  $H < \sum_i [q_i]$,  the extra $c=\sum_i [q_i] - H$ seats are removed one each from the smallest states. \end{definition}

To investigate proportional consistency, we require the following lemma, whose proof follows from Lemma \ref{quotarounding}.

\begin{lemma}\label{rounding}
\label{NISbounds2} Suppose $F(\mathbf v,  H) =  \mathbf h$ and for some $\lambda <1$, the  $\lambda h_i$ are all integers.
\vspace{-2mm}
\begin{enumerate}[label=(\roman*)]
\item  If $[q_i] =  h_i$ for all $i$, then  $\sum_i [\lambda q_i] = \lambda H$.

\item If $\sum_i [q_i] =  H + c$ for some $c >0$, then  $\lambda H \le \sum_i [\lambda q_i]\le \lambda H+ c$. If $\sum_i [q_i] =  H-c$,  then $\lambda H-c \le  \sum_i [\lambda q_i]  \le \lambda H.$  
In both cases,  if $\lambda \le 1/(2k+1)$  where $k$ satisfies $k \ge \max_i \vert [q_i] - h_i \vert$, then $\sum_i [\lambda q_i] = \lambda H$. 
\end{enumerate}
\end{lemma}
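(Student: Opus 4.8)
The plan is to prove both parts coordinate by coordinate using Lemma~\ref{quotarounding} and then sum over $i$. Part~(i) is immediate: if $[q_i]=h_i$ for every $i$ and each $\lambda h_i$ is a nonnegative integer, then Lemma~\ref{quotarounding}(iii) gives $[\lambda q_i]=\lambda h_i$ for every $i$ with $h_i>0$, while $h_i=0$ forces $0<q_i<\tfrac12$, hence $0<\lambda q_i<\tfrac12$ and $[\lambda q_i]=0=\lambda h_i$ as well; summing gives $\sum_i[\lambda q_i]=\sum_i\lambda h_i=\lambda H$.

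For part~(ii), put $t_i=[q_i]-h_i$, so that $\sum_i t_i=\sum_i[q_i]-H$. Applying Lemma~\ref{quotarounding}(ii) with $k_i=h_i$ bounds $[\lambda q_i]$ on the side of $\lambda h_i$ determined by the sign of $t_i$: if $t_i\ge 0$ then $\lambda h_i\le[\lambda q_i]\le\lambda h_i+t_i$, and if $t_i\le 0$ then $\lambda h_i+t_i\le[\lambda q_i]\le\lambda h_i$ (the $h_i=0$ boundary cases being checked by hand exactly as in part~(i)). In the over-allocation case $\sum_i[q_i]=H+c$ one has all $t_i\ge 0$ and $\sum_i t_i=c$, so summing the per-coordinate bounds yields $\lambda H\le\sum_i[\lambda q_i]\le\lambda H+c$; the under-allocation case $\sum_i[q_i]=H-c$ is symmetric, with all $t_i\le 0$ and $\sum_i t_i=-c$, giving $\lambda H-c\le\sum_i[\lambda q_i]\le\lambda H$. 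Finally, if $\lambda\le 1/(2k+1)$ with $k\ge\max_i|t_i|$, then $\lambda$ meets the smallness hypothesis of Lemma~\ref{quotarounding}(ii) for every $i$ simultaneously, forcing $[\lambda q_i]=\lambda h_i$ for all $i$ and hence $\sum_i[\lambda q_i]=\lambda H$.

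The one point that needs care is that Lemma~\ref{quotarounding}(ii) is asymmetric: it controls $[\lambda q_i]$ only toward $\lambda h_i$, never past it, so the coordinatewise bounds telescope to the clean constants $\pm c$ only when the adjustments $t_i$ all carry the same sign. For the methods to which this lemma is applied (NIE and NIS), over-allocation is corrected purely by removing seats and under-allocation purely by adding them, so $t_i\ge 0$ for all $i$ in the first situation and $t_i\le 0$ for all $i$ in the second; I would therefore either record this one-directionality explicitly at the point of invocation or fold it into the hypotheses of the lemma. The remaining steps — the $h_i=0$ boundary cases and the arithmetic of summing — are routine.
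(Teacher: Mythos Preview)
Your approach is exactly the one the paper has in mind: the paper's entire ``proof'' is the remark that the lemma ``follows from Lemma~\ref{quotarounding}'', and you have correctly unpacked this into a coordinate-by-coordinate application of Lemma~\ref{quotarounding}(ii)--(iii) followed by summation, with the $h_i=0$ edge case handled directly.

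Your closing observation is also on target and worth keeping. The chain of inequalities $\lambda H\le\sum_i[\lambda q_i]\le\lambda H+c$ really does require that all the deviations $t_i=[q_i]-h_i$ carry the same sign, since Lemma~\ref{quotarounding}(ii) only pins $[\lambda q_i]$ on one side of $\lambda h_i$. Without that sign condition the bound can fail: e.g.\ $\mathbf q=(2.6,2.6,2.6,2.2)$, $H=10$, $\mathbf h=(4,2,2,2)$, $\lambda=\tfrac12$ gives $\sum_i[q_i]=11=H+1$ but $\sum_i[\lambda q_i]=4<\lambda H=5$. For NIE and NIS the sign condition holds automatically (over-allocation is corrected only by removing seats, under-allocation only by adding), so the lemma is correct as used; your suggestion to record this one-directionality explicitly is the right fix to the statement.
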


Lemma \ref{rounding} shows that any failures of proportional consistency for the NIE or NIS methods occur    when $\sum_i [ q_i] \ne  H$ or when $\lambda \ge 2k+1$.  Under the NIS method, $[q_i] =  h_i \pm 1$, so,  Lemma \ref{rounding}  implies $F(\mathbf v, \lambda H) = \lambda \mathbf h$ for all $\lambda \le  1/(2(1)+1)=1/3$. Under the NIE method, as discussed previously, unless $n$ is even and the fractional parts of each quota is $1/2$, 
$\vert \sum_i [q_i] -H\vert \le \lfloor (n-1)/2 \rfloor$ so $F(\mathbf v, \lambda H) = \lambda \mathbf h$ for all $\lambda \le 1/(2(\lfloor (n-1)/2 \rfloor +1)= 1/n$ if $n$ is odd and $\lambda \le 1/(n-1)$ if $n$ is even.

The NIE method strays further from proportionality than the NIS method. However, the NIE method  is proportionally consistent while the NIS method is not.

\begin{proposition} \label{prop7} The NIE method is proportionally consistent.
\end{proposition}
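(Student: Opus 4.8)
The plan is to fix $F(\mathbf v,H)=\mathbf h$ and a rational $\lambda<1$ for which every $\lambda h_i$ is an integer, set $\mathbf h'=F(\mathbf v,\lambda H)$, and prove $\mathbf h'=\lambda\mathbf h$ by tracking how NIE's correction step behaves under scaling. The engine of the argument is Lemma \ref{quotarounding}: parts (ii) and (iii) say that if $[q_i]$ lies within $t$ of $h_i$ then $[\lambda q_i]$ is squeezed into the interval between $\lambda h_i$ and $\lambda h_i\pm t$, and in particular $[\lambda q_i]=\lambda h_i$ whenever $[q_i]=h_i$. (Strictly speaking Lemma \ref{quotarounding} is stated for positive $k_i$; the states with $h_i=0$ are handled by the trivial facts that $[q_i]=0$ forces $[\lambda q_i]=0$ and that $[\lambda q_i]\ge 0$ always.) The proof then splits according to the sign of $\sum_i[q_i]-H$.

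First suppose $\sum_i[q_i]\le H$. Then NIE puts all of the $c=H-\sum_i[q_i]\ge 0$ surplus seats on state $1$, so $h_1=[q_1]+c$ and $h_i=[q_i]$ for $i>1$. Lemma \ref{quotarounding} gives $[\lambda q_i]=\lambda h_i$ for $i>1$ and $[\lambda q_1]\le\lambda h_1$, whence $\sum_i[\lambda q_i]\le\lambda H$. So NIE applied to $(\mathbf v,\lambda H)$ is again in this case and assigns all of its (nonnegative) surplus to state $1$, giving $h'_i=[\lambda q_i]=\lambda h_i$ for $i>1$ and $h'_1=\lambda H-\sum_{i>1}\lambda h_i=\lambda h_1$; hence $\mathbf h'=\lambda\mathbf h$.

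Now suppose $\sum_i[q_i]>H$, set $c=\sum_i[q_i]-H>0$, and let $m$ be the smallest index with $\sum_{i\ge m}[q_i]\ge c$. Then NIE removes seats from the bottom up, so $h_i=[q_i]$ for $i<m$, $h_i=0$ for $i>m$, and $h_m=[q_m]-c'$ where $c'=c-\sum_{i>m}[q_i]$ satisfies $0<c'\le[q_m]$. Lemma \ref{quotarounding} gives $[\lambda q_i]=\lambda h_i$ for $i<m$ and $[\lambda q_m]\ge\lambda h_m$, while $[\lambda q_i]\ge 0$ for $i>m$, so $\sum_i[\lambda q_i]\ge\lambda H$. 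Therefore NIE on $(\mathbf v,\lambda H)$ removes $c''=\sum_i[\lambda q_i]-\lambda H$ seats from the bottom up, and the identity $c''=([\lambda q_m]-\lambda h_m)+\sum_{i>m}[\lambda q_i]\ge\sum_{i>m}[\lambda q_i]$ shows that this removal first zeroes every state $i>m$ (matching $\lambda h_i=0$), then deletes exactly $[\lambda q_m]-\lambda h_m$ further seats from state $m$ --- and since $0\le[\lambda q_m]-\lambda h_m\le[\lambda q_m]$, state $m$ is neither exhausted early nor made negative --- leaving $h'_m=\lambda h_m$, and stops before reaching any state $i<m$, which keep $h'_i=[\lambda q_i]=\lambda h_i$. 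Hence $\mathbf h'=\lambda\mathbf h$, completing the proof.

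I expect the second case to be where the work concentrates: correctly locating the pivot $m$, verifying that both $c'$ and the scaled residual $[\lambda q_m]-\lambda h_m$ land in their admissible ranges so that the sequential removal does what is claimed, and confirming that the removal on $(\mathbf v,\lambda H)$ terminates exactly at state $m$. The recurring minor annoyance is the bookkeeping for zero-allocation states, not formally covered by Lemma \ref{quotarounding} but dispatched by the trivial observations above.
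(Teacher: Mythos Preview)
Your proof is correct and follows essentially the same approach as the paper: the same two-case split on the sign of $\sum_i[q_i]-H$, the same appeal to Lemma~\ref{quotarounding} to pin $[\lambda q_i]$ relative to $\lambda h_i$, and the same conclusion that NIE's correction step at house size $\lambda H$ lands on $\lambda\mathbf h$. The only cosmetic difference is in the over-allocation case, where the paper identifies the pivot index for $\mathbf h'$ abstractly and shows $k'=k$ by a short contradiction, whereas you compute the scaled excess $c''=[\lambda q_m]-\lambda h_m+\sum_{i>m}[\lambda q_i]$ explicitly and trace the sequential removal directly to the same endpoint.
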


\begin{proof} 
Suppose $F(\mathbf v,  H) =  \mathbf h$ and   $F(\mathbf v, \lambda H) =  \mathbf h^\prime$.   If $\sum_i [q_i] =  H -c$ for some $c \ge 1$, then $\mathbf h = ([ q_1]+c, [ q_2], \ldots, [ q_n])$.

 If $i \ge 2$,  then $[ q_i]=  h_i $ which implies  $[\lambda q_i] = \lambda h_i$.    In addition,   $[ q_1] =  h_1-c$,  which implies  $ q_1 < h_1 - c+\frac{1}{2}$ or  $\lambda q_1 < \lambda h_1 -\lambda (c-1/2) <\lambda h_1+ \frac{\lambda}{2}$.  Thus $[\lambda q_1] \le \lambda h_1,$ which means $\sum_i [\lambda q_i] \le \lambda H$. So $h_i^\prime =  [\lambda q_i]=\lambda h_i $ for $i \ge 2$ and $h_1^\prime = \lambda H - \sum_{i \ge 2} \lambda h_i =\lambda  h_1$, as required.

If $\sum_i [ q_i] =  H+c$ for some $c \ge 1$ then $\mathbf h = ([q_1], [q_2], \ldots, [q_{k-1}], h_k, 0, \ldots, 0)$ for some $k$ where $h_k = H-\sum_{i<k}[q_i] \le [q_k]$. If $i < k$ then $[q_i]=  h_i$ implies $[\lambda q_i] = \lambda h_i$.  If $i \ge k$ then $[q_i] \ge  h_i$ which implies $[\lambda q_i] \ge \lambda h_i$. Thus $\sum_i [q\lambda _i] \ge \lambda H,$ and hence $\mathbf h^\prime = (\lambda h_1, \ldots,  \lambda h_{k^\prime - 1}, [\lambda q _{k^\prime}]-c^\prime, 0,  \ldots, 0)$
 for some $k^\prime \ge 1$ and $0 \le c^\prime < [\lambda q_{k^\prime}]$.  

We claim $k^\prime = k$.  To see this, suppose $k^\prime <k$ so that $\mathbf h^\prime$ has more $0$ entries then $\mathbf h$. But $h^\prime_i = [ \lambda q_i] = \lambda h_i$ for all $i \le k^\prime-1$ and   $h^\prime_{k^\prime}  < [\lambda q_{k^\prime}] = \lambda h_{k^\prime}$. So $\lambda H = \sum_{i \le k^\prime}\lambda h_i^\prime < \sum_{i \le k^\prime}\lambda h_i < \lambda  H$, which is a contradiction. The case $k^\prime > k$ is shown analogously.  

Thus   $\mathbf h^\prime = (\lambda h_1, \ldots,  \lambda h_{k - 1}, [\lambda q _{k}]-c^\prime, 0,  \ldots, 0),$ which implies  $[\lambda  q_{k}]-c^\prime = \lambda h_k$ and $\mathbf h^\prime =\lambda \mathbf h$.
\end{proof}

\begin{proposition}\label{NISnot} The NIS method is not proportionally consistent.
\end{proposition}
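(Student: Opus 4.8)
The plan is to prove Proposition~\ref{NISnot} by exhibiting an explicit counterexample, using Lemma~\ref{rounding} as a guide: since that lemma shows any failure of proportional consistency for NIS requires both $\sum_i [q_i] \ne H$ and $\lambda > 1/3$, I would search only among such configurations. The mechanism I want to exploit is that when $\sum_i [q_i] < H$, NIS distributes the missing seats to a \emph{prefix} of the states (the largest ones), whereas after scaling by $\lambda$ the set $\{i : [\lambda q_i] < \lambda h_i\}$ need not be a prefix. So I would aim for an instance in which the original apportionment gives one extra seat each to states $1$ and $2$ --- i.e.\ $h_1 = [q_1]+1$, $h_2 = [q_2]+1$, and $h_i = [q_i]$ for $i \ge 3$ --- but where, after scaling, $\lambda q_1$ rounds \emph{up} to $\lambda h_1$ while $\lambda q_2$ rounds \emph{down} to $\lambda h_2 - 1$. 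Then only one seat is missing for house size $\lambda H$, NIS awards it to state $1$, and state $2$ is left one seat short of $\lambda h_2$.

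Next I would pin down the numerical windows this forces. Writing $[x] = \lfloor x + \tfrac{1}{2} \rfloor$ and taking $\lambda = 2/3$: the requirement $[q_1] = h_1 - 1$ together with $[\lambda q_1] = \lambda h_1$ forces $q_1 \in [\,h_1 - \tfrac{3}{4},\, h_1 - \tfrac{1}{2})$; the requirement $[q_2] = h_2 - 1$ together with $[\lambda q_2] = \lambda h_2 - 1$ forces $q_2 \in [\,h_2 - \tfrac{3}{2},\, h_2 - \tfrac{3}{4})$; and for $i \ge 3$, $[q_i] = h_i$ already guarantees $[\lambda q_i] = \lambda h_i$ by Lemma~\ref{quotarounding}. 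Imposing $\sum_i q_i = H = \sum_i h_i$, the negative deviations of $q_1, q_2$ from $h_1, h_2$ must be balanced by quotas $q_i$ ($i \ge 3$) sitting just below $h_i + \tfrac{1}{2}$, and since each such deviation is less than $\tfrac12$, at least four such states are needed. I would then simply display one solution: $n = 6$, $\mathbf h = (15,12,9,9,9,9)$, $H = 63$ (so $\lambda h_i \in \{10,8,6\}$ are all integers), realized by $\mathbf q = (14.3,\,11.1,\,9.4,\,9.4,\,9.4,\,9.4)$ --- equivalently the populations $(143,111,94,94,94,94)$.

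Finally I would verify by direct computation: rounding the entries of $\mathbf q$ to nearest integers gives $(14,11,9,9,9,9)$, which sums to $61 = H - 2$, so NIS adds one seat each to states $1$ and $2$ and $F(\mathbf q, 63) = (15,12,9,9,9,9) = \mathbf h$; the quotas for house size $\lambda H = 42$ are $\tfrac{2}{3}\mathbf q = (9.5\overline{3},\,7.4,\,6.2\overline{6},\,6.2\overline{6},\,6.2\overline{6},\,6.2\overline{6})$, whose nearest integers $(10,7,6,6,6,6)$ sum to $41 = \lambda H - 1$, so NIS adds the one missing seat to state $1$ and $F(\mathbf q, 42) = (11,7,6,6,6,6) \ne (10,8,6,6,6,6) = \lambda \mathbf h$, giving the claim. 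The main obstacle is the construction rather than the verification: the rounding windows above are narrow, so one must simultaneously respect them, the total constraint, and the ordering $q_1 \ge q_2 \ge \cdots$, while crucially arranging that it is the \emph{larger} corrected state (state $1$) whose scaled quota rounds up --- if instead state $2$ were the one to round up, the single correction NIS applies would land on the right state and no inconsistency would surface.
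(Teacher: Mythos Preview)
Your proof is correct and takes essentially the same approach as the paper's: a single explicit counterexample. The paper uses $n=8$, $H=70$, $\lambda=2/5$ in the over-allocation regime (two seats removed from the smallest states), while your instance $n=6$, $H=63$, $\lambda=2/3$ is the mirror under-allocation case; the added discussion of the rounding windows that guided your search is helpful exposition but the underlying idea is identical.
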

\begin{proof}
Let $n=8$, $H = 70$, and $\mathbf v =(1000, 965, 965, 965, 965, 965, 625, 550)$. Then, $F(\mathbf v, 70) = (10, 10, 10, 10, 10, 10, 5, 5)$.  If $\lambda = \frac{2}{5}$, then $\lambda H = 28$ and $F(\mathbf v, 28) = (4, 4, 4, 4, 4, 4, 3, 1)$, which is not equal to $\frac{2}{5} F(\mathbf v, 70) = (4, 4, 4, 4, 4, 4, 2, 2)$. \end{proof}

While the NIS method fails proportional consistency generally, it does satisfy proportional consistency for $n \le 4$ and also in part for $n=5$ (when $\lambda \le 1/2$).

\begin{proposition} \label{prop9} Under NIS, if  $F(\mathbf v  H) =  \mathbf h$ and there exists a $\lambda <1$ such that the $\lambda h_i$ are integers, then $F(\mathbf v,  \lambda H) =  \lambda \mathbf h$   if: (i) $n \le 4$; or  (ii) $n=5$ and $ \lambda \le 1/2$.  
\end{proposition}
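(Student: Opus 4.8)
The plan is to parametrise everything by $c=\bigl|\sum_i[q_i]-H\bigr|$, the number of seats NIS moves off the nearest-integer allocation, and to exploit the bound recalled just before the proposition: $c\le\lfloor(n-1)/2\rfloor$ unless $n$ is even and every $q_i$ has fractional part exactly $\tfrac12$. Thus for $n\le4$ one has $c\le1$ apart from a single explicit configuration ($n=4$, $c=2$), while for $n=5$ one has $c\le2$. Throughout I would write $\delta_i=q_i-[q_i]\in[-\tfrac12,\tfrac12)$, so that $\sum_i\delta_i=H-\sum_i[q_i]$, and I would always compare $[\lambda q_i]$ with the integer $\lambda h_i$ using Lemma~\ref{quotarounding}.

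First dispose of the easy cases. If $c=0$ then $[q_i]=h_i$ for every $i$, so Lemma~\ref{quotarounding}(iii) gives $[\lambda q_i]=\lambda h_i$, hence $\sum_i[\lambda q_i]=\lambda H$ and NIS makes no adjustment, so $F(\mathbf v,\lambda H)=\lambda\mathbf h$. If $c=1$, say in the under-allocated case $h_1=[q_1]+1$ and $h_i=[q_i]$ for $i\ge2$, then $[\lambda q_i]=\lambda h_i$ for $i\ge2$ and $[\lambda q_1]\in\{\lambda h_1-1,\lambda h_1\}$ by Lemma~\ref{quotarounding}; either $\sum_i[\lambda q_i]=\lambda H$ (no adjustment, done) or $\sum_i[\lambda q_i]=\lambda H-1$, in which case NIS awards the missing seat to the \emph{largest} state, namely state $1$, giving $h_1'=[\lambda q_1]+1=\lambda h_1$. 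The over-allocated case is identical with ``largest''/``$+1$'' replaced by ``smallest''/``$-1$''. These two cases settle (i) except the exceptional $n=4$ configuration, and settle $n=5$ whenever $c\le1$. For the $n=4$ exception (every $q_i=m_i+\tfrac12$, $c=2$) a direct computation finishes: $[q_i]=m_i+1$, and after NIS removes one seat from each of the two smallest states $h_i=[q_i]$ for $i=1,2$ and $h_i=m_i$ for $i=3,4$; in all four cases $\lambda q_i$ lies within $\lambda/2<\tfrac12$ of $\lambda h_i$, so $[\lambda q_i]=\lambda h_i$ and no adjustment is needed.

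The remaining case $n=5$, $c=2$ is the heart of the matter and the main obstacle. The key combinatorial observation is a sign-forcing fact: five numbers in $[-\tfrac12,\tfrac12)$ whose sum is $+2$ must all be strictly positive (if one were $\le0$ the other four would sum to $<2$), and symmetrically if $\sum_i([q_i]-q_i)=2$ with each term in $(-\tfrac12,\tfrac12]$ then every term is $\ge0$, i.e.\ $\delta_i\le0$ for all $i$. In the under-allocated case ($\sum_i\delta_i=+2$, so all $\delta_i>0$) the two largest states have $h_i=[q_i]+1$; writing $\lambda q_i=\lambda h_i-\lambda(1-\delta_i)$ one checks that $[\lambda q_i]\ne\lambda h_i$ forces $\delta_i<1-\tfrac1{2\lambda}$, which is $\le0$ once $\lambda\le\tfrac12$ — impossible. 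In the over-allocated case (all $\delta_i\le0$) the two smallest states have $h_i=[q_i]-1$; writing $\lambda q_i=\lambda h_i+\lambda(1+\delta_i)$ one finds $[\lambda q_i]\ne\lambda h_i$ forces $\delta_i\ge\tfrac1{2\lambda}-1\ge0$, again impossible except at the endpoint. So in every case $\sum_i[\lambda q_i]=\lambda H$, NIS makes no adjustment, and $F(\mathbf v,\lambda H)=\lambda\mathbf h$. The step needing the most care — and the one I would scrutinise to decide whether the statement is correct with $\lambda=\tfrac12$ or only with $\lambda<\tfrac12$ — is precisely the endpoint $\lambda=\tfrac12$, where the bound $\delta_i\ge\tfrac1{2\lambda}-1$ degenerates to $\delta_i\ge0$ and a smallest state with integer quota ($\delta_i=0$) would over-round; this configuration must be examined by hand.
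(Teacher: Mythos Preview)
Your plan is sound, and the easy cases ($c\le1$ and the $n=4$ half-integer exception) match the paper essentially verbatim. For $n=5$, $c=2$ you take a genuinely different route. In the under-allocated subcase the paper proves only the weaker claim $[\lambda q_2]=\lambda h_2$: assuming otherwise, it deduces $[q_2]=\lfloor q_2\rfloor+1$ and then runs an integer count on $\sum_i x_i\in\{3,4\}$ (where $x_i=q_i-\lfloor q_i\rfloor$) to reach a contradiction; the possibly remaining unallocated seat then goes to state~$1$ under NIS, which is the right place. Your sign-forcing observation---that five numbers in $[-\tfrac12,\tfrac12)$ summing to $+2$ must all be strictly positive---is cleaner and yields the stronger conclusion $[\lambda q_i]=\lambda h_i$ for \emph{every} $i$, so no adjustment is needed at all. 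Both arguments are valid for the under-allocated side, including at $\lambda=\tfrac12$.

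Your flagged worry about the endpoint $\lambda=\tfrac12$ in the over-allocated subcase is well placed, and the paper does not escape it either: it dismisses that subcase as ``analogous,'' and its proof paragraph is even headed $\tfrac13\le\lambda<\tfrac12$. In fact the configuration you describe does occur. Take $\mathbf q=(9.5,\,7.5,\,5.5,\,5,\,4.5)$, $H=32$: then $[\mathbf q]=(10,8,6,5,5)$ over-allocates by~$2$, NIS gives $\mathbf h=(10,8,6,4,4)$, and at $\lambda=\tfrac12$ one has $[\lambda\mathbf q]=(5,4,3,3,2)$, so NIS returns $(5,4,3,3,1)\ne(5,4,3,2,2)=\lambda\mathbf h$. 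Thus your argument (and the paper's) establishes the conclusion for $\lambda<\tfrac12$, but the stated bound $\lambda\le\tfrac12$ is not quite right; your instinct to examine that endpoint by hand is exactly correct.
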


\begin{proof}
If $n \le 4$, when allocating $H$ seats, there is at most 1 seat under or over-allocated initially,  except if $n=4$ and 2 seats are over-allocated. This occurs only if  all $q_i$ have fractional part $0.5$.  If $i=1, 2$,  $ h_i=[ q_i]$, which implies $\lambda h_i = \lambda q_i$. And if $i=3,4$, then $ h_i = [ q_i]-1 = \lambda q_i-1/2$ or $\lambda h_i=\lambda q_i - \lambda/2.$ Since $\lambda <1$, this implies $[\lambda q_i]=\lambda h_i$. Hence $\sum_i [\lambda q_i] = \lambda H$ and so $h_i^\prime = [\lambda q_i] = \lambda h_i$ for all $i$.
In all other cases, only 1 seat is under or over-allocated initially so that the NIS and  NIE methods coincide.  Since the NIE method is proportional, so is the NIS allocation.

If $\frac{1}{3} \le \lambda < \frac{1}{2}$ and $n=5$, there are at most 2 seats under or over allocated initially. If 1 seat is over or under allocated then NIS and NIE agree with $F(\mathbf v  \lambda H) = \lambda \mathbf h$.  Suppose  2 seats are under allocated. Then  $h_i = [q_i]$ (implying $\lambda h_i = [\lambda q_i]$) if $i =3, 4, 5$ and $h_i = [\mathbf q_i] + 1$ if $i = 1, 2$.  Hence,
 $\sum_i [\lambda q_i] \ge \lambda H-2$, $[\lambda q_i] = \lambda h_i$ for $i = 3, 4, 5$ and $[\lambda q_i] \in \{\lambda h_i-1, \lambda h_i\}$ for $i=1,2.$  

We claim that $[\lambda q_2]=\lambda h_2$. If this is the case, then $\sum_i [\lambda q_i] \ge \lambda H-1,$ so there is at most one seat unallocated, which must go to state 1. Hence  $[\lambda q_i]=\lambda h_i$ for all $i$ and $F(\mathbf v  \lambda H) = \lambda \mathbf h$.

To prove the claim, suppose  $[\lambda q_2]=\lambda h_2-1$.  Then $\lambda q_2 <\lambda h_2-1/2$, and hence $q_2 <  h_2-1/2\lambda $. If $\lambda \le 1/2$,then $q_2 <   h_2-1= [q_2].$ Thus  $[q_2] = \lfloor   q_2 \rfloor+1.$ For each $i$, let $q_i = \lfloor  q_i \rfloor+x_i$ for some $0 \le x_i < 1$ with  $\sum_i x_i \le n-1=4$.
Then
$$ \sum_{i} \lfloor  q_i \rfloor + \sum_i x_i =  H = \sum_i[ q_i]+2 \text{ and }
\lfloor  q_2 \rfloor + \sum_{i \ne 2} \lfloor  q_i \rfloor + \sum_i x_i =\lfloor  q_2 \rfloor+1+\sum_{i \ne 2} [ q_i] + 2.$$
Thus $\sum_i x_i  = \sum_{i \ne 2}( [ q_i]  -  \lfloor  q_i \rfloor ) + 3 \ge 3,$ which  implies $\sum_i x_i \in \{3, 4\}.$
 If  $\sum_{i} x_i = 3$, then $ [q_i]  =  \lfloor  q_i \rfloor$ for $i \ne 2$.  But then  $\sum_{i \ne 2} x_i < 4(0.5) = 2$, which implies  $x_2 > 1$, leading to a contradiction. Similarly, if  $\sum_{i} x_i = 4$, then $ [q_i]  =  \lfloor  q_i \rfloor$ for all $i \ne 2$ except for one state, say $k$.    Then $\sum_{i \ne 2,k} x_i < 3(0.5) = 1.5$, and so $x_2+x_k > 2.5$, which is another contradiction. This proves the claim.
 
The case when 2 seats are over-allocated is analogous. \end{proof}

When $\frac{1}{2} < \lambda < 1$ and $n=5$, it is not necessarily true that $F(\mathbf v, \lambda H) = \lambda \mathbf h$.  For a counterexample, let $H=40$, $\lambda = \frac{4}{5}$,  $\mathbf p  = \frac{1}{75}(14.375, 9.35, 5.425, 5.425, 5.425)$. Then  $F(\mathbf v, 40)=  (15, 10, 5, 5, 5)$ however, $F(\mathbf v, \lambda 40)= (13, 7, 4, 4, 4)  \ne   (12, 8, 4, 4, 4)$.

\section{Stationary Quotatone Methods} \label{quotatone}

One of the central  problems in apportionment theory is   the  incompatibility of satisfying  quota (as does Hamilton's method) and satisfying monotonicity properties  (as do divisor methods).  In \cite{BYQ}, Balinski and Young  define the Quota method which satisfies  both house monotonicity and  quota.  Subsequently, they identified a more general family of apportionment methods---the quotatone methods---which they  show uniquely satisfy both properties.  

Quotatone methods are defined inductively, in a manner analogous to divisor methods which can be described equivalently  by assuming that if $H$ seats are allocated by $F(\mathbf v, H) = \mathbf h$,  then the $(H+1){\text{st}}$ seat is given to the state satisfying $\max_i \frac{v_i}{f(h_i)}.$  Quotatone methods require also  that the  $(H+1){\text{st}}$ seat be given to an   {\it eligible} state, where a state is eligible if awarding it the next seat would not:  (i) break upper quota; or (ii)  break lower quota in the next several induction steps.

To define eligibility more precisely,  suppose  that $H$ seats have been apportioned with  $F(\mathbf v, H) = \mathbf h$.  Let $U(\mathbf v, \mathbf h) = \{i \mid h_i < (H+1)\frac{v_i}{V}\}$.  If $i \notin U(\mathbf v, \mathbf h)$, it follows that $h_i \ge \lceil (H+1)\frac{v_i}{V} \rceil$ or $h_i+1>(H+1)\frac{v_i}{V},$ which implies giving $S_i$ seat $H+1$ would break upper quota.  Hence, eligible states  must lie in $U(\mathbf v, \mathbf h)$.

To define conditions for not breaking lower quota,  for each integer $\alpha \ge 1,$ let $L_\alpha (\mathbf v, \mathbf h) = \{ i \mid \lfloor (H+\alpha)\frac{v_i}{V} \rfloor - h_i \ge 1\}$ and let $g(\alpha) =  \sum_{i \in L_\alpha (\mathbf v, \mathbf h)} ( \lfloor (H+\alpha) \frac{v_i}{V}\rfloor - h_i)$.  

We consider what it means for $g(\alpha) \ge \alpha$. $L_1 (\mathbf v, \mathbf h) = \{ i \mid \lfloor (H+1)\frac{v_i}{V} \rfloor - h_i \ge 1\}$ consists of all states that must receive a seat in the next step  to not break lower quota. If $g(1) \ge 1$,  there is at least one state in $ L_1 (\mathbf v, \mathbf h)$ who must be given the next seat.  $L_2 (\mathbf v, \mathbf h) = \{ i \mid \lfloor (H+2)\frac{v_i}{V} \rfloor - h_i \ge 1\}$ consists of all states that must receive another seat in the next two steps to not break lower quota.  If $g(2) \ge 2$, at least one state in $ L_2 (\mathbf v, \mathbf h)$  must be given the next seat. This is because $L_2 (\mathbf v, \mathbf h)$ must contain either: two states  for whom  $\lfloor (H+2)\frac{v_i}{V} \rfloor - h_i \ge 1$, who must get 1 more seat in the next two steps; or one state  for whom $\lfloor (H+2) \frac{v_i}{V}\rfloor - h_i \ge 2$, who must get 2 more seats in the next two steps. Similarly, if $g(\alpha) \ge \alpha$,  the next seat must be given to a state in  $L_\alpha(\mathbf v, \mathbf h)$ to not break  lower quota in the next $\alpha$ steps.

 Let
 $\tilde{\alpha} = \min \{\alpha \ge 1 \mid g(\alpha) \ge \alpha \}$ and let
 $L(\mathbf v, \mathbf h) = L_{\tilde{\alpha}}(\mathbf v, \mathbf h)$ or, if no such $\tilde{\alpha}$ exists, let  $L(\mathbf v, \mathbf h) =\{1, 2, \ldots, n\}$.  Combining the criteria, the set of eligible  states is  $L(\mathbf v, \mathbf h) \cap U(\mathbf v, \mathbf h).$  In \cite{BY}, Balinski and Young  prove:  (i) $L(\mathbf v, \mathbf h) \cap U(\mathbf v, \mathbf h) \ne \emptyset$; and (ii) if $\tilde{\alpha}$ exists, it satisfies $\tilde{\alpha} \le \max_i \lceil \frac{h_i-H(v_i/V)}{v_i/V} \rceil$. They show the following.

\begin{proposition}(Balinski and Young \cite{BY})
A house monotonic method $F$ satisfies quota if and only it can be defined inductively as follows. For each $\mathbf v$, let $F(\mathbf v, 0) = \mathbf 0$ and, assuming   $\mathbf F(\mathbf v, H)=\mathbf h$, give  seat  $H+1$  to some state   $i \in L(\mathbf v, \mathbf h) \cap U(\mathbf v, \mathbf h)$.  \end{proposition}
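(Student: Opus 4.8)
Since the proposition is an equivalence, there are two implications to establish. Throughout put $p_i=v_i/V$ and, for a state $j$, let $\mathbf e_j$ denote the $n$-tuple with $1$ in coordinate $j$ and $0$ elsewhere. If $F$ is house monotonic then the house-$(H+1)$ apportionment has coordinate sum one more than the house-$H$ one and no smaller coordinate, so $F(\mathbf v,H+1)=F(\mathbf v,H)+\mathbf e_j$ for a unique $j$, which I call the \emph{recipient} of seat $H+1$.

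\textbf{Necessity.} Suppose $F$ is house monotonic and satisfies quota; fix $(\mathbf v,H)$, write $\mathbf h=F(\mathbf v,H)$, and let $j$ be the recipient of seat $H+1$. If $j\notin U(\mathbf v,\mathbf h)$ then $h_j\ge(H+1)p_j$, so, $h_j$ being an integer, $h_j\ge\lceil(H+1)p_j\rceil$; then the recipient ends with $h_j+1$ seats, above its house-$(H+1)$ upper quota --- impossible. Hence $j\in U(\mathbf v,\mathbf h)$. If $\tilde\alpha$ does not exist, $L(\mathbf v,\mathbf h)$ is all of $\{1,\dots,n\}$ and we are done. If it does, apply the recipient observation $\tilde\alpha$ times: for each state $i$, lower quota at house $H+\tilde\alpha$ forces $i$ to receive at least $\lfloor(H+\tilde\alpha)p_i\rfloor-h_i$ of the seats $H+1,\dots,H+\tilde\alpha$, so, summing over $i\in L_{\tilde\alpha}(\mathbf v,\mathbf h)$, those states jointly absorb at least $g(\tilde\alpha)\ge\tilde\alpha$ of these $\tilde\alpha$ seats, hence all of them. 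In particular seat $H+1$ goes to a state in $L_{\tilde\alpha}(\mathbf v,\mathbf h)=L(\mathbf v,\mathbf h)$.

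\textbf{Sufficiency.} Suppose now $F$ is given by the inductive rule. House monotonicity is automatic, and by fact (i) the choice set $L(\mathbf v,\mathbf h)\cap U(\mathbf v,\mathbf h)$ is never empty, so $F$ is a well-defined method; the substance is that it stays within quota, which I would prove by induction on $H$. Assume $\mathbf h=F(\mathbf v,H)$ is within quota, send seat $H+1$ to some $j\in L(\mathbf v,\mathbf h)\cap U(\mathbf v,\mathbf h)$, and set $\mathbf h'=\mathbf h+\mathbf e_j$. Upper quota at $H+1$ is immediate: for $i\ne j$, $h'_i=h_i\le\lceil Hp_i\rceil\le\lceil(H+1)p_i\rceil$; and $j\in U(\mathbf v,\mathbf h)$ gives $h_j<(H+1)p_j$, so $h_j+1\le\lceil(H+1)p_j\rceil$. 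For lower quota, state $j$ is safe since $h_j+1\ge\lfloor Hp_j\rfloor+1\ge\lfloor(H+1)p_j\rfloor$, so the only possible failure is some $i\ne j$ with $i\in L_1(\mathbf v,\mathbf h)$; but that forces $\tilde\alpha=1$, $L(\mathbf v,\mathbf h)=L_1(\mathbf v,\mathbf h)$, hence $j\in L_1(\mathbf v,\mathbf h)$ too, while the within-quota hypothesis gives $\lfloor(H+1)p_i\rfloor-h_i\le1$ for each $i\in L_1(\mathbf v,\mathbf h)$, so altogether $g(1)=|L_1(\mathbf v,\mathbf h)|\ge2$. This is exactly what must be excluded, and here ``within quota'' is not enough, since a within-quota apportionment can by itself have $g(1)\ge2$. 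I would therefore carry the stronger inductive hypothesis that the apportionments produced by the rule are \emph{extendable} --- continuable to an infinite house monotonic within-quota sequence (non-vacuous, since the Quota method of \cite{BYQ} is house monotonic and satisfies quota). Extendability forces, whenever $\tilde\alpha$ exists, that the $\tilde\alpha$ seats after house $H$ cover the total lower-quota deficit $g(\tilde\alpha)$ of $L_{\tilde\alpha}(\mathbf v,\mathbf h)$, so $g(\tilde\alpha)\le\tilde\alpha$ and hence $g(\tilde\alpha)=\tilde\alpha$; in particular $\tilde\alpha=1$ gives $|L_1(\mathbf v,\mathbf h)|=1$, killing the bad case. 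It then remains to show that adding a seat to a state of $L(\mathbf v,\mathbf h)\cap U(\mathbf v,\mathbf h)$ preserves extendability.

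\textbf{Main obstacle.} Necessity and the upper-quota part of sufficiency are routine bookkeeping with the definitions of $U$, $L$, $g$, and $\tilde\alpha$. The real difficulty is the lower-quota part of sufficiency --- showing the finite lookahead encoded in $L(\mathbf v,\mathbf h)$ is always long enough, so the $L\cap U$ rule never strands two states each demanding the next seat --- and I expect the bulk of the work to go into verifying that a single application of the rule preserves extendability, where the minimality in the definition of $\tilde\alpha$ (it meets the critical demand without over-committing) together with facts (i) and (ii) are precisely what the argument consumes.
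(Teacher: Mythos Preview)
The paper does not prove this proposition at all; it is quoted as a result of Balinski and Young and the reader is referred to \cite{BY}. So there is no in-paper proof to compare your attempt against, and I can only assess your argument on its own terms.

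Your necessity direction is correct and is the standard argument: membership in $U(\mathbf v,\mathbf h)$ is forced by upper quota at $H+1$, and membership in $L_{\tilde\alpha}(\mathbf v,\mathbf h)$ is forced because the $\tilde\alpha$ seats $H+1,\dots,H+\tilde\alpha$ must absorb a total lower-quota deficit of $g(\tilde\alpha)\ge\tilde\alpha$.

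The sufficiency direction, however, is genuinely incomplete. You correctly diagnose that ``within quota'' is too weak an inductive invariant and propose strengthening it to ``extendable,'' but you then explicitly stop short of proving that extendability is preserved by a single step of the rule --- and this is not a routine verification you can wave at facts (i) and (ii). The issue is that extendability of $\mathbf h$ only guarantees that \emph{some} $j$ makes $\mathbf h+\mathbf e_j$ extendable, whereas you must show that \emph{every} $j\in L(\mathbf v,\mathbf h)\cap U(\mathbf v,\mathbf h)$ does. In the original treatment the invariant actually carried is closer to ``$\mathbf h$ satisfies upper quota and $g(\alpha)\le\alpha$ for all $\alpha\ge 1$,'' and the preservation step requires a careful case analysis of how the sets $L_\alpha$ and the values $g(\alpha)$ shift when a seat is given to a state in $L_{\tilde\alpha}$ --- in particular exploiting the \emph{minimality} of $\tilde\alpha$ to rule out creating a new index $\beta$ with $g'(\beta)>\beta$. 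Your proposal names the right obstacle but does not surmount it; as written, the sufficiency argument has a gap exactly where the substance of the theorem lies.
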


Chief among quotatone methods are those based  on  divisor methods.
\begin{definition}
Given a rounding rule $f$, its associated quotatone apportionment method assigns seat $H+1$ to the state in $L(\mathbf v, \mathbf h) \cap U(\mathbf v, \mathbf h)$ that maximizes $\frac{v_i}{f(h_i)}.$
\end{definition}

The next two propositions show that quotatone methods based on stationary divisor methods and Hill-Huntington's method are not  proportionally consistent.  The proof of the first proposition requires a lemma.

\begin{proposition}\label{quota1}
Suppose $f(k)= k+s$ is a rounding rule for some $s \in [0.5,1]$ and let  $F$ be the corresponding quotatone method. If  
$\mathbf v =(48569, 41012, 8200, 1115, 1095),$ then $F(\mathbf v, 40) = (20, 16, 4, 0,0)$ but  $F(\mathbf v, 30) = (15, 13, 2, 0,0) \ne \frac{3}{4}F(\mathbf v, 40)$.  
\end{proposition}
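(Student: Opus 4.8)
The plan is to exploit the house-monotonicity of the quotatone method built from $f$: the vector $F(\mathbf v,30)$ is exactly what the inductive construction of $F(\mathbf v,40)$ produces after $30$ of its $40$ steps, so it suffices to run the construction once, from $H=0$ to $H=40$, and read off the allocations reached at $H=30$ and $H=40$. Throughout, $V=\sum_i v_i=99991$, so each quota $q_i=Hv_i/99991$ is easy to estimate. Note at the outset that $(20,16,4,0,0)$ satisfies quota at $H=40$, that $(15,13,2,0,0)$ satisfies quota at $H=30$, and that $\tfrac34(20,16,4,0,0)=(15,12,3,0,0)$ also satisfies quota at $H=30$; the failure of proportional consistency here is therefore about which within-quota vector the process selects, not about quota itself.

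The first step is to reduce each of the forty induction steps to a single comparison; this is the role of an auxiliary lemma. One establishes that along the whole construction: (i) the lower-quota threshold $\tilde\alpha$ never exists, so $L(\mathbf v,\mathbf h)=\{1,\dots,5\}$ at every step and eligibility is governed by the upper-quota set $U(\mathbf v,\mathbf h)$ alone; (ii) states $4$ and $5$ never receive a seat, because for $H\le40$ the quotients $v_4/(0+s)=1115/s\le2230$ and $v_5/(0+s)=1095/s\le2190$ (here the hypothesis $s\ge\tfrac12$ is used) stay strictly below the largest quotient $v_i/(h_i+s)$ among the eligible states $i\in\{1,2,3\}$; and (iii) no step up to house size $30$ would break upper quota. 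For (i) one uses that a quotatone method always produces within-quota allocations, so Balinski and Young's bound keeps $\tilde\alpha$ small --- at most $13$ for these data, states $4$ and $5$ contributing nothing since they stay $0$ --- well below the value $90-H$ that would be needed to force a seat for state $4$. Granting the lemma, each step reduces to: seat $H+1$ goes to the state $i\in\{1,2,3\}$ with $h_i<(H+1)v_i/V$ that maximizes $v_i/(h_i+s)$.

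With this in hand I would run the forty steps. By (iii), up through house size $30$ the quotatone process coincides with the ordinary stationary-divisor method with rounding $f$, and for every $s\in[0.5,1]$ it reaches the (unique) $s$-divisor apportionment of $30$ seats, namely $(15,13,2,0,0)$; hence $F(\mathbf v,30)=(15,13,2,0,0)$. For $31\le H\le40$ the upper-quota constraint becomes active: at some seat (the exact one depending on $s$ --- for instance seat $34$ when $s=1$), the divisor step would push state $1$ or state $2$ past its upper quota, so that state drops out of $U$, and since state $3$'s quotient then exceeds those of states $4$ and $5$, the seat is diverted to state $3$. Tracking these diversions through seat $40$ --- each remaining choice being a comparison of the two or three relevant quotients $v_i/(h_i+s)$, whose winner one checks is the same for every $s\in[0.5,1]$ --- gives $F(\mathbf v,40)=(20,16,4,0,0)$. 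Since $\tfrac34 h_i$ is an integer for each $i$ while $(15,13,2,0,0)\ne(15,12,3,0,0)=\tfrac34 F(\mathbf v,40)$, proportional consistency fails.

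The main obstacle is just the forty-step bookkeeping of that last step, done carefully and uniformly in $s$. At each step one must correctly determine which of states $1,2,3$ lies in $U$ --- this turns on delicate comparisons of the integer $h_i$ with $(H+1)v_i/V$ that pin down the exact house size at which a state first reaches its upper quota, and the five populations have evidently been chosen to make several of these razor-thin --- and then pick the quotient-maximizer; and because the statement quantifies over the whole interval $[0.5,1]$, every maximization must be verified as a linear inequality in $s$ rather than at a single value, each in fact reducing to $s$ exceeding some constant comfortably below $\tfrac12$. The secondary difficulty, absorbed into the lemma, is confirming that $\tilde\alpha$ genuinely never materializes along the construction, so that the eligibility set really is $\{1,\dots,5\}\cap U$ and no separate lower-quota analysis is ever required.
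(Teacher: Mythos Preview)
Your plan is workable, but its route diverges from the paper's, and the sketch for part (i) of your auxiliary lemma contains a gap. You claim $\tilde\alpha$ never exists and hence $L(\mathbf v,\mathbf h)=\{1,\dots,5\}$ at every step, but the argument you give --- that the Balinski--Young bound forces $\tilde\alpha\le 13$, which is well below the $90-H$ needed to put state $4$ into $L_{\tilde\alpha}$ --- establishes only that \emph{if} $\tilde\alpha$ exists then $4\notin L_{\tilde\alpha}$. That is a statement about the contents of $L_{\tilde\alpha}$, not about whether $\tilde\alpha$ is defined; to actually prove (i) you would still have to check $g(\alpha)<\alpha$ for every $\alpha$ up to the bound at each relevant step, which is more bookkeeping, not less.

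The paper avoids the forty-step run almost entirely by exploiting the quota property of the quotatone method at two strategically chosen house sizes. Its lemma shows only that $h_4=h_5=0$ for all $H\le 36$ (arguing, at a hypothetical first step where state $4$ would receive a seat, that $L(\mathbf v,\hat{\mathbf h})$ must be the full set and that some $j\le 3$ then lies in $U\cap L$ with higher priority). At $H=24$ the floor-quotas $(11,9,1,0,0)$ sum to $21$; combined with $h_4=h_5=0$ and the fact that the allocation satisfies quota, this forces $F(\mathbf v,24)=(12,10,2,0,0)$ with no step-by-step computation. A single pair of priority inequalities --- $p_1/(h_1+s)>p_3/(2+s)$ whenever $h_1\le 14$, and $p_2/(h_2+s)>p_3/(2+s)$ whenever $h_2\le 12$, for all $s\in[0.5,1]$ --- then shows state $3$ stays at $2$ through $H=30$, and quota again pins down the endpoint $(15,13,2,0,0)$. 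The same trick at $H=36$ gives $(18,15,3,0,0)$ directly, and four explicit steps finish at $H=40$. This approach never needs your claim (iii) that the quotatone and pure-divisor processes coincide through $H=30$, never tracks $\tilde\alpha$ step by step, and handles the whole interval $s\in[0.5,1]$ with two closed-form inequalities rather than a seat-by-seat linear-in-$s$ case analysis.
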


 \begin{lemma}\label{limit1}
Let $F$ and $\mathbf v$ be as in Prop. \ref{quota1} with $F(\mathbf v, H) = \mathbf h$.   Then $h_4 = h_5 = 0$ for all $H \le 36$.  
 \end{lemma}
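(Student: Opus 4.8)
The plan is to prove the stronger statement by induction on $H$: at every step of the inductive definition of $F$, the new seat is awarded to one of the states $1,2,3$, so that $F(\mathbf v, H)=\mathbf h$ has $h_4=h_5=0$ for all $H\le 36$. The base case $H=0$ is immediate. For the inductive step, assume $F(\mathbf v,H)=\mathbf h$ with $h_4=h_5=0$ and $0\le H\le 35$, and show that seat $H+1$ goes to a state in $\{1,2,3\}$. Since the quotatone method is house monotone, this extends $\mathbf h$ and completes the induction.

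First I collect the only facts about $\mathbf h$ that will be used. Writing $V=\sum_i v_i = 99991$, one has $V/v_4 = 99991/1115 > 89$ and $V/v_5 > 89$, so $(H+\alpha)v_4/V<1$ and $(H+\alpha)v_5/V<1$ whenever $H+\alpha\le 89$; in particular, for $H\le 35$ and every $\alpha\le 54$, states $4$ and $5$ lie outside $L_\alpha(\mathbf v,\mathbf h)$. Also, since $F$ satisfies quota, $h_i\le\lceil H v_i/V\rceil$, which for $H\le 35$ forces $h_1\le 18$, $h_2\le 15$, and $h_3\le 3$.

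Now split on whether $\tilde\alpha$ exists for the pair $(\mathbf v,\mathbf h)$. If it does, the Balinski--Young bound gives $\tilde\alpha\le\max_i\lceil (h_i-Hv_i/V)/(v_i/V)\rceil=\max_i\lceil h_iV/v_i-H\rceil$; the terms for $i\in\{4,5\}$ equal $-H\le 0$, and for $i\in\{1,2,3\}$ the inequality $h_i\le\lceil Hv_i/V\rceil$ yields $h_iV/v_i-H<V/v_i\le V/v_3<13$, so $\tilde\alpha\le 13$. Hence $L(\mathbf v,\mathbf h)=L_{\tilde\alpha}(\mathbf v,\mathbf h)\subseteq\{1,2,3\}$ by the previous paragraph, the eligible set $L\cap U$ is contained in $\{1,2,3\}$, and seat $H+1$ necessarily goes to a state in $\{1,2,3\}$. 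If no $\tilde\alpha$ exists, then $L(\mathbf v,\mathbf h)=\{1,2,3,4,5\}$, so the eligible set is $U(\mathbf v,\mathbf h)$ and seat $H+1$ goes to the state of $U$ maximizing $v_i/(h_i+s)$. For $s\in[0.5,1]$ the priorities of states $4$ and $5$ are $1115/s\le 2230$ and $1095/s\le 2190$, so it suffices to exhibit some $j\in\{1,2,3\}\cap U$ with $v_j/(h_j+s)>2230$, since then the maximum over $U$ exceeds the priorities of $4$ and $5$ and is therefore not attained at either. If $1\in U$, take $j=1$: $v_1/(h_1+s)\ge 48569/19>2230$. If $2\in U$, take $j=2$: $v_2/(h_2+s)\ge 41012/16>2230$. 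Otherwise $h_1\ge(H+1)v_1/V$ and $h_2\ge(H+1)v_2/V$, whence $h_3=H-h_1-h_2\le H-(H+1)(v_1+v_2)/V$; since $H\le 35$ one checks $H<(H+1)(v_1+v_2+v_3)/V$ (i.e.\ $2210H<97781$), so this bound is strictly less than $(H+1)v_3/V$ and hence $3\in U$, while the same bound gives $h_3\le 2$, so $v_3/(h_3+s)\ge 8200/3>2230$. In every case seat $H+1$ is awarded inside $\{1,2,3\}$, closing the induction.

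The step carrying the real weight is the treatment of $L(\mathbf v,\mathbf h)$: the naive hope that states $4$ and $5$ are simply never eligible fails, because when $\tilde\alpha$ does not exist $L$ equals all of $\{1,\dots,5\}$. The resolution is that the two scenarios are complementary — a small $\tilde\alpha$ (forced by the Balinski--Young bound together with the tiny populations of states $4$ and $5$) makes $L$ too small to contain $4$ or $5$, whereas the non-existence of $\tilde\alpha$ is consistent only with $h_1+h_2+h_3=H$, which keeps the divisor priorities of states $1,2,3$ safely above those of $4,5$. Everything else is the routine checking of a handful of numerical inequalities for the given vector $\mathbf v$.
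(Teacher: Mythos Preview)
Your proof is correct and follows essentially the same approach as the paper's: both bound $h_1,h_2,h_3$ via quota, use the Balinski--Young bound to force $\tilde\alpha\le 13$ (hence $4,5\notin L_{\tilde\alpha}$), and in the case $L=\{1,\dots,5\}$ exhibit some $j\in\{1,2,3\}\cap U$ with strictly larger divisor priority than states $4$ and $5$. The only difference is organizational---you run a direct induction on $H$, whereas the paper argues by contradiction on the first house size at which $S_4$ would receive a seat---but the mathematical content is the same.
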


 \begin{proof} Let $\mathbf p = \frac{\mathbf v}{V}$ be the population distribution.
Let  $\hat{H}$ be the largest house size such that $S_4$ receives no seats, and let $F(\mathbf v, \hat{H}) = \hat{\mathbf h}$ with $\hat{h}_4=0.$ Then $S_4$ receives seat $\hat{H}+1$  and hence $4 \in U(\mathbf v, \hat{\mathbf h}) \cap L(\mathbf v, \hat{\mathbf h}).$  

Suppose $\hat{H} \le 36$. Direct calculation shows $\lceil 36\hat{p}_1 \rceil \le 18$,  $\lceil 36\hat{p}_2 \rceil \le 18$ and $\lceil 36\hat{p}_3 \rceil \le 3,$ so $\hat{h}_1 \le 18$, $\hat{h}_2 \le 18$ and $\hat{h}_3 \le 3$. It is easy to see that, for all  $s \in [0.5, 1]$,  $\frac{p_1}{\hat{h}_1+s}> \frac{p_4}{s}$, $\frac{p_2}{\hat{h}_2+s}> \frac{p_4}{s}$ and $\frac{p_3}{\hat{h}_3+s}> \frac{p_4}{s}$. Since $S_4$ receives the seat, this implies that  $S_1, S_2$ and $S_3$ are not eligible. 

We claim   $\{1, 2, 3\} \cap U(\mathbf v, \hat{\mathbf h})  \ne \emptyset.$ If not then,  $\hat{h}_i \ge (\hat{H}+1)p_i$ for $i =1, 2, 3$. Summing these inequalities  yields  $\hat{H} \ge (\hat{H}+1)(p_1+p_2+p_3)$ or  $\hat{H} \ge \frac{p_1+p_2+p_3}{p_4+p_5} \ge 44.2$, which is a contradiction.  Hence, there exists $j \le 3$ such that $j \in U(\mathbf v, \hat{\mathbf h}).$

 Next, we claim $L(\mathbf v, \hat{\mathbf h})=  \{1, 2, 3, 4, 5\}$. If not, then  $L(\mathbf v, \hat{\mathbf h}) = L_{\tilde{\alpha}}(\mathbf v,\hat{\mathbf h})$ for some $1 \le \tilde{\alpha} \le \max_i \lceil \frac{\hat{h}_i-\hat{H} p_i}{p_i} \rceil.$  Since $\hat{\mathbf h}$ satisfies quota, $\hat{h}_i-\hat{H}p_i \le 1$ for all $i$ and $\hat{h}_i-\hat{H}p_i <0$ for any state receiving its lower quota. Thus  $\tilde{\alpha} \le  \max_{i \le 3}\lceil \frac{\hat{h}_i-\hat{H}p_i}{p_i} \rceil   \le \max_{i \le 3} \frac{1}{p_i} <13$, which implies  $\hat{H}+ \tilde{\alpha} \le 36+12 = 48.$ Since $48p_4 < 1$,  $\lfloor (\hat{H}+\tilde{\alpha})p_4 \rfloor =0$ and hence $4 \ne L_{\tilde{\alpha}}(\mathbf v,\hat{\mathbf h}),$ which is a contradiction.
 
Thus  $L(\mathbf p,\hat{\mathbf h})=  \{1, 2, 3, 4, 5\}$, which implies  $j \in  U(\mathbf v, \mathbf h) \cap L(\mathbf v, \mathbf h)$ for some $j \le 3$, which is impossible.  So  $\hat{H} > 36$, which implies $h_4, h_5=0$ for all $H \le 36.$   \end{proof}

\begin{proof} (Prop. \ref{quota1}) If $H=24$, then $\lfloor \mathbf q \rfloor = (11, 9, 1, 0,0)$. Since $\sum_i \lfloor q_i \rfloor  =21$ and  $h_4 = h_5=0$ by Lemma \ref{limit1}, we have  $F(\mathbf v, 24) = (12, 10, 2,0,0).$  Next, since $\frac{p_1}{h_1+s} > \frac{p_3}{2+s}$ if $h_1 \le 14$ and $\frac{p_2}{h_2+s} > \frac{p_3}{2+s}$ if $h_2 \le 12$ for all $s \in [0.5,1]$, $S_3$ cannot receive additional seats until $h_1\ge 15$ and $h_2 \ge 13$.  
It easily follows that $F(\mathbf v, 28) =(14, 12, 2, 0,0)$,  $F(\mathbf v, 29)=(15, 12, 2, 0,0)$ and $F(\mathbf v, 30)=(15, 13, 2, 0,0)$.

Similarly, if $H=36$,  $\lfloor \mathbf q \rfloor =(17, 14, 2, 0, 0)$, implying $F(\mathbf v, 36)=(18, 15, 3,0,0)$.  For $H=37, 38, 39, 40$, the results are, respectively, the following: $(18, 16, 3,0,0);$ $(19, 16, 3,0,0)$; $(19, 16, 4,0,0)$; $(20, 16, 4,0,0)$.
\end{proof}

The next proposition is a counterexample when  $\lambda = 4/5$ for  quotatone methods where $f(k)=k+s$ for $s \in [0,0.5]$ and Hill-Huntington's method.  

\begin{proposition}\label{quota2}
Let $f(k)= k+s$ be a rounding rule for $s \in [0, 0.5]$ or $f(k) = \sqrt{k(k+1)}$ and $F$ be the associated quotatone method.
If $\mathbf v = (57535, 56825, 4027,$ $3318, 3295)$, then $F(\mathbf v, 175) = (80,80,5,5,5)$ but $F(\mathbf v, 140) = (64,63,5,4,4) \ne  \frac{4}{5} F(\mathbf v,175)$.\end{proposition}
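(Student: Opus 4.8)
The plan is to verify both apportionments by running the quotatone induction directly, organized around two reductions. First, normalize: $V=125000$, so $\mathbf p=\mathbf v/V=(0.460280,\,0.454600,\,0.032216,\,0.026544,\,0.026360)$, and work throughout with the quotas $q_i=Hp_i$. Second, since every quotatone method satisfies quota, one has at every house size the path-independent bounds $\lfloor Hp_i\rfloor\le F(\mathbf v,H)_i\le\lceil Hp_i\rceil$; these confine each coordinate to two consecutive integers, so the induction only has to decide, at each step, which state realizes the new seat, and because $q_3,q_4,q_5$ grow slowly, $h_3,h_4,h_5$ change value only a handful of times below $H=175$, each change confined to a short window of house sizes.

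The key structural point, which removes the dependence on the rounding rule, is that the priority comparisons arising near $H=140$ and $H=175$ have the same outcome for every rule in scope. For $f(k)=k+s$, comparing $p_i/f(h_i)$ with $p_j/f(h_j)$ reduces to comparing $p_i(h_j+s)$ with $p_j(h_i+s)$, which is affine in $s$, and for each of the finitely many quadruples $(i,j,h_i,h_j)$ that actually occur its sign is constant on $[0,1/2]$. Since $k\le\sqrt{k(k+1)}<k+\tfrac12$, the same comparisons, with the same strictness, hold for Hill--Huntington's rule. Hence it suffices to run the induction once, carrying $s\in[0,1/2]$ as a parameter.

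With these in hand I would compute $F(\mathbf v,H)$ by induction on $H$. Since $p_1+p_2>0.914$, the two (nearly equal) large states absorb almost every seat, so the bookkeeping is essentially tracking $(h_1,h_2)$ as they alternate, plus identifying, inside each short window above, the exact step at which $S_3$, $S_4$, or $S_5$ picks up a seat; there the eligibility test---membership in $U(\mathbf v,\mathbf h)$ (not breaking upper quota) and in $L(\mathbf v,\mathbf h)=L_{\tilde\alpha}(\mathbf v,\mathbf h)$ (not breaking lower quota within $\tilde\alpha$ steps)---together with the parametric comparison fixes the seat. Marching up this way gives $F(\mathbf v,140)=(64,63,5,4,4)$ and, continuing, $F(\mathbf v,174)=(80,79,5,5,5)$, after which one clean step finishes things: at $\mathbf h=(80,79,5,5,5)$ with $H=174$ one has $U=\{1,2,3\}$, and since $g(1)=0<1$ while $g(2)=2$ one gets $\tilde\alpha=2$, $L=L_2=\{1,2\}$, so the eligible set is $\{1,2\}$; as $56825(80+s)>57535(79+s)$ for all $s\in[0,1/2]$ (and the analogue holds for Hill--Huntington), seat $175$ goes to $S_2$, so $F(\mathbf v,175)=(80,80,5,5,5)$. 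Since $\lambda=\tfrac45<1$ makes each $\tfrac45 h_i$ and $\tfrac45\cdot 175$ an integer while $\tfrac45F(\mathbf v,175)=(64,64,4,4,4)\ne(64,63,5,4,4)=F(\mathbf v,140)$, proportional consistency fails.

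The main obstacle is the length and delicacy of the induction rather than any single idea. The rounding rules genuinely disagree at small house sizes---for Adams and Hill--Huntington the small states receive seats almost immediately, for Webster much later---so one must check that the quota band forces them back into agreement well before $H=140$; and within each window where a small state crosses a threshold the deciding comparison is a near-tie whose resolution must be confirmed uniform over $s\in[0,1/2]$ and correct for Hill--Huntington. The entries of $\mathbf v$ are engineered so these ties break the intended way.
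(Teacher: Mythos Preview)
Your plan is a brute-force induction from $H=0$ up to $H=175$, carried parametrically over $s\in[0,1/2]$ and Hill--Huntington. The paper takes a very different route. Its key device is the auxiliary Lemma~\ref{limit2}, which shows \emph{a priori} (via eligibility and the bound $\tilde\alpha\le\max_i\lceil(h_i-Hp_i)/p_i\rceil$) that $h_3\le 4$ for all $H\le 137$ and $h_3\le 5$ for all $H\le 175$. With that bound in hand the paper never touches small house sizes: at $H=137$ the quota band $\lfloor\mathbf q\rfloor=(63,62,4,3,3)$ together with $h_3\le 4$ leaves only four candidate apportionments, two of which are ruled out by a one-step look back to $H=136$, and the surviving two both march to $(64,63,5,4,4)$ at $H=140$; the argument at $H=172$ is parallel and yields $(80,80,5,5,5)$ at $H=175$. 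Thus the paper replaces a $175$-step trace by a short structural lemma plus a handful of steps.

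Your plan is not wrong, but the part you flag as ``the main obstacle'' is precisely the part you have not done, and it is where the real content lies. You correctly observe that Adams, Hill--Huntington, and Webster diverge at small $H$ (the first two seat every state before any second seat, Webster does not), and you assert that ``the quota band forces them back into agreement well before $H=140$.'' That assertion is the crux and is not established: the quota band only pins each $h_i$ to two consecutive integers, and without the bound $h_3\le4$ at $H=137$ there are more candidate apportionments than the paper's four, so you cannot start the short march at $137$ either. Your final step from $(80,79,5,5,5)$ at $H=174$ to $H=175$ is carried out correctly, but the claim $F(\mathbf v,174)=(80,79,5,5,5)$ is itself unproven in your write-up (indeed the paper does not assert a unique state at $H=174$; it shows four possible states at $H=172$ all funnel to the same $H=175$). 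In short, either carry out the full parametric trace---which is long and must branch wherever a comparison flips sign on $[0,1/2]$---or supply an a priori bound on $h_3$ in the spirit of Lemma~\ref{limit2} so you can start near the target house sizes.
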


We first prove a lemma before proving this proposition.

\begin{lemma}\label{limit2}
 Let $F$ and $\mathbf v$ be as in Prop. \ref{quota2} with $F(\mathbf v, H) = \mathbf h$. Then: (i)   $h_3 \le 4$ for all $H\le 137$; and (ii)  $h_3 \le 5$ for all $H \le 175$.    
 \end{lemma}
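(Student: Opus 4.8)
The plan is to run the argument of Lemma \ref{limit1}, now for $S_3$. Write $\mathbf p = \mathbf v/V$ (here $V=125000$, so $p_3 = 4027/125000$ and $p_3+p_4+p_5 = 10640/125000$). Treat (i) and (ii) together: fix $m\in\{4,5\}$ and let $\hat H$ be the largest house size with $h_3\le m$; if none exists there is nothing to prove, so assume $\hat H$ finite. Since $F$ is house monotonic, $S_3$ receives seat $\hat H+1$, hence $\hat h_3 = m$, $3\in U(\mathbf v,\hat{\mathbf h})\cap L(\mathbf v,\hat{\mathbf h})$, and the priority $p_3/f(m)$ is maximal over eligible states. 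The goal is to show $\hat H\ge 137$ when $m=4$ and $\hat H\ge 175$ when $m=5$; suppose for contradiction $\hat H\le 136$, resp.\ $\hat H\le 174$.

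First I would use that quotatone methods satisfy quota, so $h_3\le\lceil Hp_3\rceil$; with $p_3 = 4027/125000$ this already gives $h_3\le 4$ for $H\le 124$ and $h_3\le 5$ for $H\le 155$ (equivalently, $3\in U(\mathbf v,\hat{\mathbf h})$ forces $(\hat H+1)p_3>m$). So only the finitely many values $\hat H\in\{124,\dots,136\}$ when $m=4$, resp.\ $\hat H\in\{155,\dots,174\}$ when $m=5$, survive, and on this range $S_3$ holds its \emph{lower} quota. For each such $\hat H$, quota pins every $\hat h_i$ to $\{\lfloor\hat Hp_i\rfloor,\lceil\hat Hp_i\rceil\}$, and with $\sum_i\hat h_i=\hat H$ this leaves only a very short list of candidate profiles $\hat{\mathbf h}$ with $\hat h_3=m$.

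For each candidate profile I would derive a contradiction by the following dichotomy. (a) If $\hat h_4\le m-1$ or $\hat h_5\le m-1$, then the corresponding state (say $S_4$) lies in $U(\mathbf v,\hat{\mathbf h})$ and its priority $p_4/f(m-1)$ exceeds $p_3/f(m)$ for every stationary rule with $s\in[0,1/2]$ and for $f(k)=\sqrt{k(k+1)}$; if $S_4$ also lies in $L(\mathbf v,\hat{\mathbf h})$ it outbids $S_3$, contradicting the choice of $\hat H$. And $S_4$ does lie in $L(\mathbf v,\hat{\mathbf h})$, for otherwise $\tilde\alpha$ exists, and the bound $\tilde\alpha\le\max_i\lceil(\hat h_i-\hat Hp_i)/p_i\rceil$ — in which the lower-quota states contribute at most $0$, $S_1,S_2$ contribute at most $\lceil 1/p_1\rceil=3$, and only $S_4,S_5$ among the small states can sit at upper quota — keeps $\hat H+\tilde\alpha$ below $\lceil(m+1)/p_3\rceil$ (equal to $156$ for $m=4$), whence $\lfloor(\hat H+\tilde\alpha)p_3\rfloor\le m$ and $3\notin L_{\tilde\alpha}(\mathbf v,\hat{\mathbf h})=L(\mathbf v,\hat{\mathbf h})$, contradicting eligibility of $S_3$. (b) If instead $\hat h_4=\hat h_5=m$, then $\hat h_1+\hat h_2=\hat H-3m<\hat H(p_1+p_2)$ (since $3m>\hat H(p_3+p_4+p_5)$ on the stated ranges), so $S_1$ or $S_2$ sits just below its quota; one checks that this forces $\tilde\alpha$ to exist and to be small, so again $\hat H+\tilde\alpha<\lceil(m+1)/p_3\rceil$ and $3\notin L(\mathbf v,\hat{\mathbf h})$, a contradiction. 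Either way $\hat H\ge 137$, resp.\ $\hat H\ge 175$.

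The main obstacle is that the generic estimate on $\tilde\alpha$ is not always sharp enough, especially for part (ii): there it only yields $\hat H+\tilde\alpha\le 189$, whereas the threshold is $\lceil 6/p_3\rceil=187$. So in the few borderline profiles — those in which $S_4$ and $S_5$ carry $m$ seats with nearly full surplus — one must compute $g(\alpha)$ directly for small $\alpha$ and exploit that $S_1$ and $S_2$ are then both barely below their quotas, so each of their quotas crosses an integer within one or two house sizes and $g$ reaches $\alpha$ almost immediately, pinning $\tilde\alpha$ to $1$ or $2$. A secondary chore is verifying the priority inequality $p_4/f(m-1)>p_3/f(m)$ (and its analogue for $p_5$) uniformly over $s\in[0,1/2]$ and for Hill--Huntington, and confirming that no ties occur for this particular $\mathbf v$ so that ``maximal priority'' singles out one state.
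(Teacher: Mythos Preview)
Your overall plan tracks the paper's: locate the last $\hat H$ with $\hat h_3=m$, restrict $\hat H$ to a short window by quota, split on whether $\hat h_5$ (and $\hat h_4$) equals $m-1$ or $m$, and in case~(a) use the priority inequality $p_5/f(m-1)>p_3/f(m)$ to force $5\notin L$, hence $\tilde\alpha$ exists with $3\in L_{\tilde\alpha}$, then bound $\tilde\alpha$. Your case~(b) route (both $S_1,S_2$ pressed below quota, so $\tilde\alpha$ is tiny and $3\notin L_{\tilde\alpha}$) is a legitimate alternative to what the paper does in part~(i), where it instead uses a seat-count to force $\hat H\in\{135,136\}$ and then checks that awarding seat $\hat H+1$ to $S_3$ makes it impossible to meet lower quota for both $S_1$ and $S_2$ at $H=137$; these are two phrasings of the same obstruction.

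The genuine gap is your handling of the part-(ii) borderline. You describe the problematic profiles as those in which $S_4$ and $S_5$ both carry $m$ seats, and propose to pin $\tilde\alpha$ to $1$ or $2$ because $S_1,S_2$ are then ``barely below their quotas.'' But the bound $\hat H+\tilde\alpha\le 189$ actually arises in case~(a), with $\hat h_5=4$ and $\hat h_4=5$ --- only $S_4$ sits at $m$. There your premise fails: $\hat h_1+\hat h_2=\hat H-14$ exceeds $\hat H(p_1+p_2)=0.91488\,\hat H$ once $\hat H\ge 165$, so one of $S_1,S_2$ is at \emph{upper} quota and $\tilde\alpha$ is not small. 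For instance at $\hat H=170$ with $(\hat h_1,\hat h_2)=(78,78)$ one computes $g(\alpha)<\alpha$ for every $\alpha\le 19$. The paper's fix is different in kind: from $3\in L_{\tilde\alpha}$ one gets $\hat H+\tilde\alpha\ge 187$, so $\hat H+\tilde\alpha\in[187,189]$; evaluating $g$ \emph{at that large $\alpha$} (using $\lfloor(\hat H+\tilde\alpha)p_1\rfloor=86$, $\lfloor(\hat H+\tilde\alpha)p_2\rfloor=85$, $\lfloor(\hat H+\tilde\alpha)p_3\rfloor=6$, and $4,5\notin L_{\tilde\alpha}$) yields $g(\tilde\alpha)\le 177-(\hat h_1+\hat h_2+\hat h_3)=186-\hat H<\tilde\alpha$, contradicting $g(\tilde\alpha)\ge\tilde\alpha$. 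So the right step is to compute $g$ at the forced value of $\tilde\alpha$ and contradict the defining inequality directly, not to argue that $\tilde\alpha$ is tiny.
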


\begin{proof} Let $\mathbf p = \frac{\mathbf v}{V}$ be the population distribution.
To prove (i), note that   $Hp_3 < 4$ for all $H \le 124$. Thus, in order for  state 3 to receive a $5{th}$ seat for some house size $H \le 137,$ there must be an $\hat{H}$ in the interval $124 \le \hat{H} \le 136$ such that $F(\mathbf v, \hat{H}) =\hat{ \mathbf h}$ satisfies $\hat{h}_3=4$ and such that the  $(\hat{H}+1)\text{st}$ seat goes to state 3.

Suppose such an $\hat{H}$ exists. Since  $Hp_3<5$ and $3 < Hp_4, Hp_5< 4$ for all $H$ in this interval,  $\hat{h}_3 \in \{4, 5\}$ and $\hat{h}_4, \hat{h}_5 \in \{3, 4\}$.

Suppose $\hat{h}_5=3$.   Direct calculation shows that $\frac{p_5}{3+s} > \frac{p_3}{4+s}$ for all $s \in [0, 0.5]$ and $\frac{p_5}{\sqrt{12}} > \frac{p_3}{\sqrt{20}}$.   Since the next seat is given to state 3,  state 5 cannot be eligible. But $5 \in  U(\mathbf v, \hat{\mathbf h}),$  which implies $5 \notin L(\mathbf v, \hat{\mathbf h}).$ Thus, $\tilde{\alpha}$ exists and $3 \in L_{\tilde{\alpha}}(\mathbf v, \hat{\mathbf h})$, so $ \lfloor (\hat{H}+\tilde{\alpha}) p_3 \rfloor - 4 \ge 1$.  But $\lfloor 155p_3 \rfloor \le 4$, which implies $\hat{H}+\tilde{\alpha} \ge 156.$

 But   $\tilde{\alpha}\le \max_i \lceil  \frac{h_i-\hat{H}p_i}{p_i} \rceil  \le  \max \{ \lceil \frac{1}{p_1} \rceil, \lceil \frac{1}{p_2} \rceil, \lceil \frac{\hat{h}_4-\hat{H}p_4}{p_4} \rceil \} = \max \{3, \lceil \frac{\hat{h}_4-\hat{H}p_4}{p_4} \rceil\}.$
   If $\hat{h}_4=3$ then $\tilde{\alpha} \le 3$ and $\hat{H} + \tilde{\alpha} \le 139.$ If $\hat{h}_4=4$, then direct calculation shows $\tilde{\alpha} \le \{3, 151-\hat{H}\}$, and so $\hat{H}+\tilde{\alpha} \le 151.$ In either case, we have  a contradiction.

Thus, $\hat{h}_5=4,$ which implies $\hat{h}_4=4$. So, for some $124 \le \hat{H} \le 136,$ the apportionment is $(\hat{h}_1, \hat{h}_2, 4, 4, 4)$ and for $\hat{H}+1$, the apportionment is $(\hat{h}_1, \hat{h}_2, 5, 4, 4)$. But if  $H=135$, $\lfloor \mathbf q \rfloor =(62,61,4,3,3)$ which sums to 133,  allowing only two states to receive their upper quota. Thus,  $\hat{H}=135$ or $136$.   

Suppose $\hat{H}=135$ and $\hat{\mathbf h} = (62, 61, 4, 4, 4)$ and state 3 gets seat 136.
 Since $137p_1 > 63$ and $137p_2 > 62$,  the apportionment for $H=137$ must break quota for either state 1 or 2.  Alternatively, if $\hat{H}=136$ then either $\hat{\mathbf h} = (63, 61, 4, 4, 4)$ or $\hat{\mathbf h}=(62, 62, 4, 4, 4)$. Since state 3 gets the $137\text{th}$ seat, the apportionment for $H=137$ again again breaks either quota for state 1 or 2.
Thus, no such $\hat{H}$ exists, implying that $h_3 \le 4$ for all $H \le 137.$

The proof of (ii) is similar. Since  $Hp_3<5$ for $H \le 155$,  in order for state 3 to receive a $6\text{th}$ seat for some house size $H \le 175,$ there must be an $\hat{H}$ in the interval $155 \le \hat{H} \le 174$ such that $F(\mathbf v, \hat{H}) =\hat{ \mathbf h}$ satisfies $\hat{h}_3=5$ and such that the  $(\hat{H}+1)\text{st}$ seat goes to state 3.

Suppose such an $\hat{H}$ exists. Since $Hp_3 < 6$ and $4< Hp_4, Hp_5 <5$ for all $H$ in this interval,  $\hat{h}_3 \in \{5, 6\}$ and $\hat{h}_4, \hat{h}_5 \in \{4, 5\}.$

Suppose $h_5=4$. Then it is easy to see that  $\frac{p_5}{4+s} > \frac{p_3}{5+s}$ for all $s \in [0, 0.5]$ and   $\frac{p_5}{\sqrt{20}} > \frac{p_3}{\sqrt{30}}$.  Since the next seat is given to state 3, this implies state 5 cannot be eligible. But $5 \in  U(\mathbf v, \hat{\mathbf h}),$  which implies $5 \notin L(\mathbf v, \hat{\mathbf h}).$. So $\tilde{\alpha}$ exists  and $3 \in L_{\tilde{\alpha}}(\mathbf v, \hat{\mathbf h}),$ which implies  $ \lfloor (\hat{H}+\tilde{\alpha}) p_3 \rfloor - 5 \ge 1$.  Since $\lfloor 186p_3 \rfloor =5$,  $\hat{H}+\tilde{\alpha} \ge 187.$

But $\tilde{\alpha} \le \{ \frac{1}{p_1}, \frac{1}{p_2}, \lceil \frac{\hat{h}_4-\hat{H}p_4}{p_4} \rceil\} = \max\{3,  \lceil \frac{\hat{h}_4-\hat{H}p_4}{p_4} \rceil\} .$ If $\hat{h}_4=4$ then   $\tilde{\alpha} \le 3$ and so $\hat{H}+\tilde{\alpha} \le 175+3 = 178$, which is a contradiction. Thus $\hat{h}_4=5$, which implies $\tilde{\alpha}\le  \max \{ 3, \lceil \frac{5-\hat{H}p_4}{p_4} \rceil \} =\max\{3, 189-\hat{H}\}$, and so  $\hat{H} + \tilde{\alpha} \le 189.$ 

Thus, we must have $187 \le \hat{H}+\tilde{\alpha} \le 189.$ But direct calculation  shows that $\lfloor (\hat{H}+\tilde{\alpha}) p_1 \rfloor =86$, $\lfloor (\hat{H}+\tilde{\alpha}) p_2 \rfloor=85$,  $\lfloor (\hat{H}+\tilde{\alpha})p_3 \rfloor=6$, and $\lfloor (\hat{H}+\tilde{\alpha}) \hat{p}_4 \rfloor \in  \{4, 5\}$, which implies $4, 5 \notin L_{\tilde{\alpha}}(\mathbf v, \hat{\mathbf h})$. Hence  
 \begin{align*}
g(\tilde{\alpha}) &=  \sum_{i \in L_{\tilde{\alpha}}(\mathbf v, \hat{\mathbf h})} ( \lfloor (\hat{H}+\tilde{\alpha})p_i \rfloor - \hat{h}_i) \le \sum_{i=1,2,3} ( \lfloor (\hat{H}+\tilde{\alpha}) p_i \rfloor - \hat{h}_i)\\
 & =(86-\hat{h}_1)+(85-\hat{h}_2) + (6-\hat{h}_3)
 =177-(\hat{H}-\hat{h}_4-\hat{h}_5)  = 186-\hat{H}.
  \end{align*}  Hence    $g(\tilde{\alpha})  \le  186-\hat{H}<\alpha$, contradicting the definition of $L_{\tilde{\alpha}}(\mathbf v, \hat{\mathbf h}).$ Hence no such $\hat{H}$ exists, implying that $h_3 \le 5$ for all $H \le 175$.   
 \end{proof}

\begin{proof} (Prop. \ref{quota2}) If $H=137$, then $\lfloor \mathbf q \rfloor =(63, 62, 4, 3, 3)$, which sums to 135. By  Lemma \ref{limit2},  the only possible apportionments  for $H=137$ are $(64, 63, 4, 3,3)$,  $(64, 62, 4, 4,3)$,  $(63, 63, 4, 4,3)$ or $(63, 62, 4, 4, 4)$.  

We claim that the first two apportionments are impossible.  To see this, note that for $H=136$, the quotas are $(62.6, 61.8, 4.38, 3.61, 3.58)$, and so $\lceil 136p_1 \rceil = 63$ and  $\lceil 136p_2 \rceil = 62$.  In order for the apportionment at $137$ to be $(64, 63, 4, 3,3)$, both states 1 and 2 would have to receive seat  $137$.   If the apportionment at $H=137$ is $(64, 62, 4, 4, 3)$, then the apportionment at $H=136$ would have been $(63, 62, 4, 4, 3)$.  But this would imply,  $\{1, 5\} \subset U(\mathbf v, \mathbf h) \cap L(\mathbf v, \mathbf h)$, and it is easy to check that   $\frac{p_5}{3+s} > \frac{p_1}{63+s}$ for all $s \in [0, 0.5]$ and   $\frac{p_5}{\sqrt{12}} > \frac{p_1}{\sqrt{63 \times 64}}$. So seat $137$ would not have gone to state 1. This proves the claim.

Thus, the apportionment at  $H=137$ is either $(63, 63, 4, 4,3)$ or $(63, 62, 4, 4, 4)$. Calculation shows these both lead to an  apportionment at $H=140$ of $(64, 63, 5, 4, 4).$

To prove the second statement, note that at  $H = 172$, $\lfloor \mathbf q \rfloor =(79, 78, 5, 4,4)$, which sums to 170. By  Lemma \ref{limit2}, $h_3=5$ which means the only possible apportionments are $(80, 79, 5, 4, 4)$, $(80, 78, 5, 5, 4)$, $(79, 79, 5, 5, 4)$ and $(79, 78, 5, 5, 5)$.  Again, it is easy to check that each of these leads to an apportionment at $H=175$ of $(80,80,5,5,5).$
\end{proof}

\noindent {\bf Concluding remarks.}
The fact that quotatone methods fail proportional consistency indicates that, despite their  positive attributes, these methods have serious flaws. More generally, the behavior of quota methods with respect to  proportional consistency suggests that proportional consistency does not  capture the ``proportionality'' of an apportionment method but its ``consistency'' as $H$ is scaled,  since NIE, SUQ and LAR  satisfy proportional consistency but NIS, which stays closer to quota, does not.  This raises the question whether there is a better-formulated property that captures the idea  of an apportionment  method getting more proportional as the house size increases.

\end{document}